\tikzset{node distance=2em, ch/.style={circle,draw,on chain,inner sep=2pt},chj/.style={ch,join},every path/.style={shorten >=4pt,shorten <=4pt},line width=1pt,baseline=-1ex}
\newtheorem{thm}{Theorem}
\newtheorem{lem}[thm]{Lemma}
\newtheorem{prop}[thm]{Proposition}
\newtheorem{cor}[thm]{Corollary}
\theoremstyle{remark}
\theoremstyle{definition}
\newcommand\myurl[1]{\url{#1}}
\newcommand{\nc}{\newcommand}
\nc{\ssec}{\subsection}
\nc{\on}{\operatorname}
\nc{\sE}{\mathscr{E}}
\nc{\sF}{\mathscr{F}}
\nc{\sL}{\mathscr{L}}
\nc{\sD}{\mathscr{D}}
\nc{\sA}{\mathscr{A}}
\nc{\cC}{\mathcal{C}}
\nc{\cG}{\mathcal{G}}
\nc{\cV}{\mathcal{V}}
\nc{\cK}{{k(\!(s)\!)}}
\nc {\K}{\mathcal{K}}
\nc{\cE} {\mathcal{E}}
\nc{\Kl}{\mathrm{Kl}}
\nc{\cO}{\mathcal{O}}
\nc{\cF}{\mathcal{F}}
\nc{\cZ}{\mathcal{Z}}
\nc{\bcZ}{\overline{\mathcal{Z}}}
\nc{\cD}{\mathcal{D}}
\nc{\cDt}{\mathcal{D}^\times}
\nc{\cH}{\mathcal{H}}
\nc{\bZ}{\mathbb{Z}}
\nc{\bQ}{\mathbb{Q}}
\nc{\bR}{\mathbb{R}}
\nc{\bC}{\mathbb{C}}
\nc{\bQl}{\overline{\mathbb{Q}}_\ell}
\nc{\bQlt}{\bQl^\times} 
\nc{\FG}{\mathrm{FG}}
\nc{\dR}{\mathrm{dR}}
\nc{\uG}{\underline{G}}
\nc{\cU}{\mathcal{U}}
\nc{\rat}{\mathrm{rat}}
\nc{\Hyp}{\mathrm{Hyp}}
\nc{\Lie}{\mathrm{Lie}}
\nc{\fF}{\mathfrak{F}}
\nc{\fB}{\mathfrak{B}}
\nc{\fZ}{\mathfrak{Z}}
\nc{\fx}{\mathfrak{x}}
\nc{\fy}{\mathfrak{y}}
\nc{\fb}{\mathfrak{b}}
\nc{\fk}{\mathfrak{k}}
\nc{\fI}{\mathfrak{i}}
\nc{\fj}{\mathfrak{j}}
\nc{\fg}{\mathfrak{g}}
\nc{\fu}{\mathfrak{u}}
\nc{\fl}{\mathfrak{l}}
\nc{\fn}{\mathfrak{n}}
\nc{\cP}{\mathcal{P}}
\nc{\ft}{\mathfrak{t}}
\nc{\fz}{\mathfrak{z}}
\nc{\fc}{\mathfrak{c}}
\nc{\fh}{\mathfrak{h}}
\nc{\fp}{\mathfrak{p}}
\nc{\bone}{\mathbf{1}}
\nc{\tg}{\mathtt{g}}
\nc{\hfg}{\widehat{\fg}}
\nc{\ch}{\check{\fh}}
\nc{\hg}{\widehat{\mathfrak{g}}}
\nc{\gO}{\mathfrak{g}[\![t]\!]}
\nc{\Ug}{\widehat{U}(\mathfrak{g})}
\nc{\dl}{/\!\!/}
\nc{\bGm}{\mathbb{G}_m}
\nc{\bGa}{\mathbb{G}_a}
\nc{\bL}{\mathbf{L}}
\nc{\bK}{\mathbf{K}}
\nc{\bJ}{\mathbf{J}}
\nc{\bI}{\mathbf{I}}
\nc{\bV}{\mathbb{V}}
\nc{\bM}{\mathbb{M}}
\nc{\bP}{\mathbb{P}}
\nc{\bA}{\mathbb{A}}
\nc{\bN}{\mathbb{N}}
\nc {\Q}{\mathrm{Q}}
\nc{\diag}{\mathrm{diag}}
\nc{\ev}{\mathrm{ev}}
\nc{\Res}{\mathrm{Res}}
\nc{\Fl}{\mathcal{F}\ell}
\nc{\Ad}{\mathrm{Ad}}
\nc{\ad}{\mathrm{ad}}
\nc{\pr}{\mathrm{pr}}
\nc{\Sl}{\mathfrak{sl}}
\nc{\gl}{\mathfrak{gl}}
\nc{\ra}{\rightarrow}
\nc{\tra}{\twoheadrightarrow}
\nc{\hra}{\hookrightarrow}
\nc{\quo}{\mathopen{ /\!/}}
\nc{\GL}{\mathrm{GL}}
\nc{\SL}{\mathrm{SL}}
\nc{\Sp}{\mathrm{Sp}}
\nc{\SO}{\mathrm{SO}}
\nc{\so}{\mathfrak{so}}
\nc{\PGL}{\mathrm{PGL}}
\nc{\Bun}{\mathrm{Bun}}
\nc{\supp}{\mathrm{supp}}
\nc{\bgamma}{\bar{\gamma}}
\nc{\I}{\mathrm{I}}
\nc{\II}{\mathrm{II}}
\nc{\III}{\mathrm{III}}
\nc{\ab}{\mathrm{ab}}
\nc{\td}{\mathrm{d}}
\nc{\Ht}{\mathrm{ht}}
\nc         {\rar}[1]       {\stackrel{#1}{\longrightarrow}}
\nc{\fa}{\mathfrak{a}}
\nc{\Hit}{\mathrm{Hit}}
\nc{\RS}{\mathrm{RS}}
\nc{\Loc}{\mathrm{Loc}}
\nc{\tLoc}{\widetilde{\mathrm{Loc}}}
\nc{\reg}{\mathrm{reg}}
\nc{\im}{\mathrm{Im}}
\nc{\tp}{\mathfrak{p}}
\nc{\cA}{\mathcal{A}}
\nc{\cY}{\mathcal{Y}}
\nc{\opp}{\mathrm{opp}}
\nc{\Ind}{\mathrm{Ind}}
\nc{\sAn}{\mathrm{can}}
\nc{\Vac}{\mathrm{Vac}}
\nc{\Op}{\mathrm{Op}}
\nc{\Lg}{\check{\fg}}
\nc{\cDelta}{\check{\Delta}}
\nc{\cPhi}{\check{\Phi}}
\nc{\LV}{\check{V}}
\nc{\Lh}{\check{h}}
\nc{\LG}{\check{G}}
\nc{\cT}{\check{T}}
\nc{\ct}{\check{\ft}}
\nc{\cB}{\check{B}}
\nc{\cb}{\check{\fb}}
\nc{\cN}{\check{N}}
\nc{\cn}{\check{\fn}}
\nc{\Spec}{\mathrm{Spec}}
\nc{\End}{\mathrm{End}}
\nc{\crho}{\check{\rho}}
\nc{\rX}{\mathring{X}}
\nc{\ru}{\mathring{u}}
\nc{\sW}{\mathscr{W}}
\nc{\sH}{\mathscr{H}}
\nc{\sV}{\mathscr{V}}
\nc{\geom}{\mathrm{geom}}
\nc{\Irr}{\mathrm{Irr}}
\nc{\fm}{\mathfrak{m}}
\nc{\aff}{\mathrm{aff}}
\nc{\Aut}{\mathrm{Aut}}
\nc{\cJ}{\mathcal{J}}
\nc{\fs}{\mathfrak{s}}
\nc{\Stab}{\mathrm{Stab}}
\nc{\tw}{{\widetilde{w}}}
\nc{\gen}{\mathrm{gen}}
\nc{\genn}{\mathrm{genn}}
\nc{\sss}{\mathrm{ss}}
\nc{\spp}{\mathfrak{sp}}
\nc{\Hom}{\mathrm{Hom}}
\nc{\bm}{\mathbf{m}}
\nc{\HG}{\mathcal{HG}}
\nc{\Gal}{\mathrm{Gal}}
\nc{\Sym}{\mathrm{Sym}}
\nc{\rank}{\mathrm{rank}}
\nc{\tP}{\mathtt{P}}
\nc{\tL}{\mathtt{L}}
\nc{\tU}{\mathtt{U}}
\nc{\tW}{\widetilde{W}}
\nc{\Hk}{\on{Hk}}
\nc{\cL}{\mathcal{L}}
\nc{\talpha}{\widetilde{\alpha}}
\nc{\tQ}{{\widetilde{Q}}}
\nc{\ochi}{\overline{\chi}}
\nc{\tdelta}{\widetilde{\Delta}}
\nc{\wt}{\mathrm{wt}}
\nc{\fQ}{\mathfrak{Q}}
\nc{\Rep}{\mathrm{Rep}}
\nc{\Conn}{\mathrm{Conn}}
\nc{\Hecke}{\mathrm{Hecke}}
\nc{\Gr}{\mathrm{Gr}}
\nc{\GR}{\mathrm{GR}}
\nc{\IC}{\mathrm{IC}}
\nc{\Std}{\mathrm{Std}} 
\nc{\Db}{\mathrm{D}^{\mathrm{b}}}
\nc{\tr}{\mathrm{tr}}
\nc{\gr}{\mathrm{gr}}
\nc{\Fun}{\mathrm{Fun}~}
\newcommand{\quash}[1]{}
\begin{document} 
\renewcommand{\thepart}{\Roman{part}}

\renewcommand{\partname}{\hspace*{20mm} Part}

\begin{abstract}
We show that the Frenkel-Gross connection on $\bGm$ is physically rigid as $\check{G}$-connection, thus confirming the de Rham version of a conjecture of Heinloth-Ng\^o-Yun. The proof is based on the construction of the Hecke eigensheaf of a connection with only generic oper structure, using the localization of Weyl modules.
\end{abstract}

\title{On the Physically Rigidity of Frenkel-Gross Connection} 
\author{Lingfei Yi} 
\date{\today} 
\maketitle

\tableofcontents

\section{Introduction}
\subsection{The Frenkel-Gross connection}
Let $G$ be a complex simply-connected simple algebraic group over $\bC$. Let $\LG$ be the Langlands dual group of $G$, which is of adjoint type. Let $\Lg$ be the Lie algebra of $\LG$. Fix a maximal torus $\cT$ of $\LG$ and a Borel subgroup $\cB$ containing $\cT$. Denote the corresponding simple roots, positive roots and roots of $\LG$ by $\cDelta\subset\cPhi^+\subset\cPhi$. For each root $\alpha\in\check{\Phi}$ of $\Lg$, we fix a basis $E_\alpha\in\Lg_\alpha$ of the root subspace. Thus $N=\sum_{\alpha\in\cDelta}E_{-\alpha}$ is a regular nilpotent element of $\Lg$. Denote the highest root of $\Lg$ by $\theta$, and let $E=E_\theta$. In \cite{FGr}, Frenkel and Gross studied the following connection on the trivial $\LG$-bundle over $\bGm$:
\begin{equation}\label{eq:FG conn}
\nabla_{\FG,\lambda}:=\td+(N+\lambda tE)\frac{\td t}{t},\qquad \lambda\in\bC^\times,
\end{equation}
where $t$ is a coordinate on $\bP^1$. We call the above a Frenkel-Gross connection. In \emph{loc. cit.}, they proved that $\nabla_{\FG,\lambda}$ has regular singularity at $0$ with principal unipotent monodromy, has irregular singularity at $\infty$ with slope $\frac{1}{h}$ where $h$ is the Coxeter number of $\LG$. Moreover, they computed its global monodromy group for all types of $\LG$ and proved that it is cohomologically rigid. 

In \cite{Zhu}, Zhu showed that Frenkel-Gross connection is the same as the de Rham version of the $\ell$-adic Kloosterman sheaf for reductive groups constructed by Heinloth, Ng\^o and Yun in \cite{HNY}, where it is defined as the eigenvalue of certain Hecke eigensheaf; see \S\ref{sss:review on FG} for details.

\subsection{Main results}\label{ss:intro main}
For a $\LG$-connection $\nabla$ on the punctured projective line $U=\bP^1-S$, $S$ a finite subset, we say $\nabla$ is \emph{physically rigid} if for any $\LG$-connection $\nabla'$ on $U$ that is locally isomorphic to $\nabla$ over the punctured disk $D_z^\times$ around each point $z\in S$, there exists a global isomorphism $\nabla\simeq\nabla'$. In this paper, we show that the Frenkel-Gross connection is physically rigid. In fact, we prove the following stronger result:
\begin{thm}\label{t:main}
If $\nabla$ is a $\LG$-connection $\nabla$ on $\bGm$ satisfying:
\begin{enumerate}
	\item $\nabla$ has regular singularity at $0$ with unipotent monodromy,
	\item $\nabla$ has irregular singularity at $\infty$ with slope $\frac{1}{h}$, $h$ the Coxeter number of $\LG$, 
\end{enumerate}
then there exists $\lambda\in\bC^\times$ such that $\nabla\simeq \td+(N+\lambda tE)\frac{\td t}{t}$.
\end{thm}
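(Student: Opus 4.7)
The plan is to pass to the automorphic side via geometric Langlands and exploit rigidity of Hecke eigensheaves. Recall that in \cite{HNY} Heinloth, Ng\^o and Yun construct an explicit Hecke eigensheaf on a level-structured moduli space $\Bun_G(\bP^1)$; by \cite{Zhu}, its de Rham eigenvalue is the Frenkel-Gross connection $\nabla_{\FG,\lambda}$. The aim is to attach to each $\nabla$ satisfying the hypotheses of the theorem a Hecke eigensheaf of the same local type, and then to identify this sheaf with the Heinloth-Ng\^o-Yun sheaf by a uniqueness argument; equality of Hecke eigenvalues will then force $\nabla\simeq\nabla_{\FG,\lambda}$ for some $\lambda\in\bC^\times$.

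The first step will be a local analysis at the two punctures. At $\infty$, the minimal slope $1/h$ together with the formal classification of $\Lg$-connections on the punctured disk forces the leading term of the irregular part to lie in the Coxeter adjoint orbit, and its formal type to coincide, after gauge transformation, with that of $\nabla_{\FG,\lambda}$ for a uniquely determined $\lambda$. At $0$, the regular singularity with unipotent monodromy puts $\nabla$ in a Deligne-Levelt normal form accessible to the oper formalism. These local normalizations provide the seeds needed for the construction below.

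The second and central step will be to show that $\nabla$ carries a \emph{generic oper structure}: a reduction of the underlying $\LG$-bundle to $\cB$ over a Zariski dense open $U\subset\bGm$ for which $\nabla$ satisfies the oper transversality condition on $U$. The point is that the irregular part at $\infty$ already lies on the affine Kostant slice associated with $(N,E)$, and the principal unipotent monodromy at $0$ is compatible with the oper-at-a-regular-singular-point picture, so a $\cB$-reduction can be built and glued over $U$. With this generic oper in hand, the Beilinson-Drinfeld localization functor at the critical level will be applied: at $\infty$ one localizes an appropriate Weyl module twisted by the generic oper data, and at $0$ one uses an Iwahori-integrable Weyl module matched to the unipotent monodromy. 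This will produce a twisted $D$-module $\cF_\nabla$ on a level-structured $\Bun_G(\bP^1)$, which by general properties of localization is a Hecke eigensheaf with Hecke eigenvalue $\nabla$.

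The final step will be to identify $\cF_\nabla$ with the Heinloth-Ng\^o-Yun Hecke eigensheaf. Both will be nonzero cuspidal Hecke eigensheaves on the same moduli stack carrying the same local ramification data at $0$ and $\infty$; rigidity of the latter sheaf forces an isomorphism, and comparing Hecke eigenvalues yields $\nabla\simeq\nabla_{\FG,\lambda}$. The main technical obstacle I anticipate is the second step: upgrading the purely local data of $\nabla$ (unipotent monodromy at $0$, slope $1/h$ at $\infty$) into a genuine generic oper structure on $\bGm$, since one must simultaneously realize the Kostant slice behavior at $\infty$ and the oper normalization at $0$ by a single global $\cB$-reduction over an open $U$, and verify that the oper transversality condition survives this gluing.
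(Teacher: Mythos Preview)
Your overall architecture matches the paper's: build a generic oper structure on $\nabla$, localize suitable critical-level modules to obtain a Hecke eigensheaf on the HNY moduli stack $\Bun_{\cG}$, and then invoke the rigidity of \cite{HNY} to identify it with the Heinloth--Ng\^o--Yun sheaf. But several of the pivots are misplaced, and the step you flag as the main obstacle is not where the real work lies.

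First, the generic oper structure is not obtained by gluing local normal forms at $0$ and $\infty$. The paper instead shows $\nabla|_{D_\infty^\times}$ is irreducible (a short group-theoretic argument with the Coxeter element), hence $\nabla$ is generically irreducible, and then quotes Arinkin's theorem \cite{Arinkin} that any irreducible connection admits an oper structure over some dense open $U\subset\bGm$. No hand-built gluing is needed, and in fact your proposed gluing would be hard to make rigorous.

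Second, and more seriously, you omit the points $z_1,\dots,z_k\in\bGm\setminus U$ where the generic oper structure fails to extend. These are the places where Weyl modules $\bV_{\lambda_i}$ enter: the local oper $\chi|_{D_{z_i}^\times}$ has trivial underlying monodromy, so by \cite{FGWeyl} it lies in $\Op^{\reg}_{\Lg}(D_{z_i})_{\varpi(-\lambda_i-\rho)}$ for a unique dominant $\lambda_i$, and one must localize $\bV_{\lambda_i}$ there. At $0$ the correct input is a Verma module $\bM^{\opp}_{w_0\lambda_0}$ (Iwahori-integrable), and at $\infty$ it is $\Vac_{\fI(2)}$, not a Weyl module. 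Getting these modules right is what makes the resulting D-module live on $\Bun_{\cG}$ with the correct $(V,\cL_\phi)$-equivariance.

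Third, the assertion that the localized sheaf ``by general properties of localization is a Hecke eigensheaf'' hides the genuine technical core: one must show the localization is \emph{nonzero}. This is the content of the paper's \S\ref{s:flatness}, where it is proved that $\cD'_{z_i,\lambda_i}=\Loc(\bM^{\opp}_{\lambda_0}\otimes_i\bV_{\lambda_i}\otimes\Vac_{\fI(2)})$ is flat over the algebra $A=\Fun\Op_*\otimes_{U(V)^T}U(V)$. The proof passes to associated graded, computes $\gr\Fun\Op_*$ via Hitchin bases and the Feigin--Frenkel--Rybnikov description $\bcZ_\lambda\simeq\bcZ_0\otimes N_\lambda$, and uses that $V_{\lambda_i}$ is a free rank-one $N_{\lambda_i}$-module. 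Without this flatness, specializing at the character $(\chi,\phi)$ could kill the module.

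Finally, the uniqueness step is not a vague ``rigidity'' but the specific statement \eqref{eq:HNY main}: the category of holonomic $(V,\cL_\phi)$-equivariant D-modules on $\Bun_{\cG}$ is equivalent to $\mathrm{Vect}$. Once $\cA$ is shown to be nonzero, holonomic, and $(V,\cL_\phi)$-equivariant, it is automatically a sum of copies of the unique irreducible; a further associated-graded computation over the open cell $V\hookrightarrow\Bun_{\cG}$ shows it is exactly one copy.
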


It is proved in \cite[Theorem 5]{KSformal} that for a formal $\LG$-connection $\nabla$ over a punctured disk $D_\infty^\times$, slope equals $\frac{1}{h}$ is equivalent to that the irregularity of the adjoint connection $\nabla^\Ad$ equals the rank of $\LG$, which is also equivalent to that $\nabla$ is isomorphic to $\nabla_{\FG,\lambda}|_{D_\infty^\times}$ for some $\lambda$. Thus the above theorem is equivalent to the de Rham version of the Conjecture 7.2. in \cite{HNY} when the character $\chi$ at $0$ in the conjecture is trivial. 

Theorem \ref{t:main} immediately implies the physically rigidity of Frenkel-Gross connection:
\begin{cor}\label{c:physically rigid}
The Frenkel-Gross connection $\nabla_{\FG,\lambda}$ is physically rigid.
\end{cor}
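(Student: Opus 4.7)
The plan is to derive Corollary \ref{c:physically rigid} as a direct consequence of Theorem \ref{t:main}. Let $\nabla'$ be any $\LG$-connection on $\bGm$ that is locally isomorphic to $\nabla_{\FG,\lambda}$ on each of the punctured formal disks $D_0^\times$ and $D_\infty^\times$. First, I would verify that $\nabla'$ satisfies the hypotheses of Theorem \ref{t:main}. By \cite{FGr}, $\nabla_{\FG,\lambda}$ has regular singularity at $0$ with principal unipotent monodromy and irregular singularity at $\infty$ of slope $\frac{1}{h}$. Both properties depend only on the formal isomorphism class over the respective punctured disk, so $\nabla'$ inherits them from the given local isomorphisms. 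Hypotheses (1) and (2) of Theorem \ref{t:main} thus hold for $\nabla'$, and the theorem produces some $\lambda'\in\bC^\times$ with $\nabla'\simeq\nabla_{\FG,\lambda'}$ globally on $\bGm$.

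To finish, one must promote this to $\nabla'\simeq\nabla_{\FG,\lambda}$. Since $\nabla'$ is locally isomorphic at $\infty$ to both $\nabla_{\FG,\lambda}$ and $\nabla_{\FG,\lambda'}$, one has $\nabla_{\FG,\lambda}|_{D_\infty^\times}\simeq\nabla_{\FG,\lambda'}|_{D_\infty^\times}$. By the formal classification of slope-$\frac{1}{h}$ $\LG$-connections at $\infty$ cited in the paragraph following Theorem \ref{t:main} (i.e.\ \cite[Theorem 5]{KSformal}), together with the explicit form of the leading irregular term of $\nabla_{\FG,\lambda}$, such a local isomorphism determines the parameter up to an ambiguity that is realized by a global gauge transformation of $\LG$-bundles on $\bGm$. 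Hence $\nabla_{\FG,\lambda'}\simeq\nabla_{\FG,\lambda}$ globally, and so $\nabla'\simeq\nabla_{\FG,\lambda}$, as required.

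The main obstacle is negligible: all the substance lies in Theorem \ref{t:main}, and the corollary is a routine unwinding of definitions. The only things to check beyond the theorem are that regularity with unipotent monodromy and slope are local formal invariants at their respective punctures, and that two Frenkel-Gross connections that agree locally at $\infty$ are globally isomorphic on $\bGm$, both of which are standard.
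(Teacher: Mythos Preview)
Your proposal is correct and follows essentially the same route as the paper: apply Theorem~\ref{t:main} to obtain $\nabla'\simeq\nabla_{\FG,\lambda'}$ for some $\lambda'$, then use the local isomorphism at $\infty$ to identify the two Frenkel--Gross connections. The paper is slightly sharper in the final step: it invokes \cite[Corollary 18]{KSCoxeter} to conclude that the formal isomorphism class at $\infty$ is parametrized exactly by $\lambda\in\bC^\times$, hence $\lambda=\lambda'$ on the nose, whereas your appeal to \cite[Theorem 5]{KSformal} plus an ``ambiguity absorbed by a global gauge'' is vaguer than necessary.
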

\begin{proof}
Let $\nabla_{\FG,\lambda}=\td+(N+\lambda tE)\frac{\td t}{t}$ be a Frenkel-Gross connection, and $\nabla$ is another $\LG$-connection on $\bGm$ satisfying $\nabla|_{D_z^\times}\simeq\nabla_{\FG,\lambda}|_{D_z^\times}$ for $z=0,\infty$. Then from Theorem \ref{t:main}, there exists $\lambda'\in\bC^\times$ such that $\nabla\simeq\td+(N+\lambda' tE)\frac{\td t}{t}$. 

Let $s=t^{-1}$ be a coordinate around $\infty$, then $\nabla_{\FG,\lambda}|_{D_\infty^\times}=\td-(N+\lambda s^{-1}E)\frac{\td s}{s}$. From \cite[Corollary 18]{KSCoxeter}, we know the moduli of formal connections on the punctured disk $D_\infty^\times$ of the form $\td-(N+\lambda s^{-1}E)\frac{\td s}{s}$ is given by $\lambda\in\bC^\times$. Note that in \emph{loc. cit.}, they use $t$ to denote the coordinate on the punctured disk, and use $\lambda$ to denote the coefficient $\td-\lambda(N+s^{-1}E)\frac{\td s}{s}$. After moving $\lambda$ to the coefficient of $E$ by a constant gauge transform, it is no longer up to a $h^{\mathrm{th}}$ root of unity.

Thus $\nabla_{\FG,\lambda}|_{D_\infty^\times}\simeq\nabla|_{D_\infty^\times}$ implies $\lambda=\lambda'$, and further $\nabla_{\FG,\lambda}\simeq\nabla$.
\end{proof}

For $\LG=\GL_n$, the notion of rigidity for connections on vector bundles over a smooth curve is defined and studied in detail by Katz \cite{KatzRigid}. Besides physically rigidity, there is another notion of rigidity called cohomologically rigid, defined as the vanishing of the cohomology of the intermediate extension of the adjoint connection to the whole projective line. For connections on smooth curves with regular singularities, Katz proved that these two notions of rigidity are equivalent \cite[\S1]{KatzRigid}, and Bloch and Esnault proved this for general connections \cite[Remark 4.11]{BE}. 

For a general reductive group $\LG$, one can similarly define these two notions of rigidity, see \cite[\S3]{YunCDM} for a detailed discussion in the language of $\ell$-adic sheaves. Various examples of cohomologically rigid $\LG$-connections on $\bGm$ have been constructed, see \cite{FGr,Chen,KSCoxeter}. However, the relation between cohomologically rigid and physically rigid is unknown for general $\LG$-connections, and there is no known example of a physically rigid $\LG$-connection for a reductive group $\LG$ beyond type A (see \cite[Theorem 7]{KSformal} for the proof of the physically rigidity of Frenkel-Gross connection when $\LG=\SL_n$). To the knowledge of the author, the above Corollary \ref{c:physically rigid} is the first example of physically rigid $\LG$-connections when $\LG$ is not of type A.

\subsection{Outline of the proof of Theorem \ref{t:main}}\label{ss:outline}

Let $\nabla$ be a $\LG$-connection on $\bGm$ that is isomorphic to $\nabla_{\FG,\lambda}$ over $D_0^\times$ and $D_\infty^\times$. The strategy is to identify their corresponding Hecke eigensheaves on the moduli stack of $G$-bundles over $\bGm$ with appropriate level structures at $0$ and $\infty$. This requires the Galois-to-automorphic direction of geometric Langlands correspondence. To explain our strategy, we need to first review some previous results.

\subsubsection{Hecke eigensheaf of Frenkel-Gross connection}\label{sss:review on FG}
Let $F=k(t)$ be the function field of $\bP^1$, $F_x=\bC(\!(t_x)\!)$ the local field at $x\in\bP^1$ with a uniformizer $t_x$, $\cO_x$ the ring of integers in $F_x$. Let $I\subset G(\cO_\infty)$ be the Iwahori subgroup defined as the preimage of a Borel subgroup $B\subset G$ under the evaluation map $G(\cO_\infty)\ra G(\bC)$, $t_\infty\mapsto 0$. Let $I(1)$ be the preimage of the unipotent radical of $B$, and $I(2)=[I(1),I(1)]$, i.e. the second and third terms in the Moy-Prasad filtration of $I$. Let $I^\opp\subset G(\cO_0)$ be the opposite Iwahori subgroup associated to the opposite Borel $B^\opp$. Consider the group scheme $\cG$ on $\bP^1$ such that
\begin{equation}\label{eq:cG}
\cG|_{\bGm}=G\times\bGm,\qquad \cG(\cO_0)=I^\opp,\qquad \cG(\cO_\infty)=I(2).
\end{equation}

Denote the moduli stack of $\cG$-bundles over $\bP^1$ by $\Bun_\cG$. The quotient group $V:=I(1)/I(2)$ acts on $\Bun_\cG$. Moreover, $V$ is an affine space isomorphic to the direct sum of root subspaces of simple affine roots. Let $\phi:V\ra\bGa$ be an additive character that is nonzero when restricted to each root subspace in $V$, and denote by $\cL_\phi$ the pullback via $\phi$ of the exponential D-module on $\bGa$. One of the main results of \cite{HNY}\footnote{They work in the language of $\ell$-adic sheaves.} is that the category of holonomic D-modules on $\Bun_\cG$ that are $(V,\cL_\phi)$-equivariant is isomorphic to the category of vector spaces:
\begin{equation}\label{eq:HNY main}
D_h\!-\!\mathrm{mod}(\Bun_\cG)^{(V,\cL_\phi)}\simeq\mathrm{Vect},
\end{equation}
where the irreducible object is the clean extension of $\cL_\phi$ along a natural open embedding $V\hookrightarrow\Bun_\cG$.

In \cite{Zhu}, Zhu proved that the irreducible $(V,\cL_\phi)$-equivariant holonomic D-module on $\Bun_\cG$ is a Hecke eigensheaf with eigenvalue $\nabla_{\FG,\lambda}$ for some $\lambda\in\bC^\times$ determined by $\phi$. The proof is based on the observation that $\nabla_{\FG,\lambda}$ can be naturally equipped with the structure of an \emph{oper} on $\bGm$, which means it has a reduction to Borel satisfying a transversality condition, see \cite[\S3]{BD} for the precise definition. For connections on a complete smooth curve $X$ that can be equipped with an oper structure, Beilinson and Drinfeld constructed the corresponding Hecke eigensheaves on $\Bun_G(X)$ using a localization functor from Harish-Chander modules of affine Kac-Moody algebra to twisted D-modules on $\Bun_G(X)$. In \cite{Zhu}, Zhu applied a variant of this construction to $\nabla_{\FG,\lambda}$.

\subsubsection{Our strategy}\label{sss:strategy}
Given a connection $\nabla$ that is locally isomorphic to $\nabla_{\FG,\lambda}$, we want to construct an irreducible Hecke eigensheaf of it on $\Bun_\cG$ that is $(V,\cL_\phi)$-equivariant. From \eqref{eq:HNY main}, this must coincide with the Hecke eigensheaf of $\nabla_{\FG,\lambda}$, thus implying the isomorphism $\nabla\simeq\nabla_{\FG,\lambda}$. Our approach is a combination of the constructions in \cite[\S9.6]{FrenkelLecture} and \cite{Zhu}, which consists of a few steps:
\begin{itemize}
\item Step 1: Take a generic oper structure on $\nabla$, i.e. an oper structure over an open subscheme $U\subset\bGm$. The existence of generic oper structure is guaranteed by \cite[Corollary 1.1.]{Arinkin}.

\item Step 2: Choose an appropriate Harish-Chandra module $V_z$ at each singularity $z\in\bP^1-U$. Let $\hg_z$ be the affine Kac-Moody algebra associated to $\fg=\Lie(G)$ at $z$. The restriction to $D_z^\times$ of the generic oper structure fixed in Step 1 defines a local oper over $D_z^\times$, which gives a character of the center of the completed universal enveloping algebra of $\hg_z$ through Feigin-Frenkel isomorphism. We want $V_z$ to be a $\hg_z$-module such that the center acts by this character. Moreover, we want $V_0$ to be $I^\opp$-integrable, $V_\infty$ to be $I(2)$-integrable, and $V_z$ to be $G(\cO_z)$-integrable for each $z\in\bGm-U$.

\item Step 3: Produce a D-module $\cA$ on $\Bun_\cG$ by applying multiple points localization functor to the tensor product of $V_z$, $z\in\bP^1-U$. In order to obtain a D-module on $\Bun_\cG$ that is $(V,\cL_\phi)$-equivariant, we need to quotient $V_\infty$ by the additive character $\phi:V\ra\bC$.

\item Step 4: Show that the D-module $\cA$ on $\Bun_\cG$ constructed in Step 3 is nonzero, holonomic, irreducible, $(V,\cL_\phi)$-equivariant, and is Hecke eigen with eigenvalue $\nabla$.	
\end{itemize}

Then from \eqref{eq:HNY main} we can see $\cA$ must be the same Hecke eigensheaf constructed by Zhu, thus the eigenvalue of $\cA$ is also a Frenkel-Gross connection. Theorem \ref{t:main} follows.

\subsection{Further directions}

\subsubsection{The physically rigidity of Kloosterman sheaves for reductive groups} 
The Frenkel-Gross connection is the characteristic zero analogue of the Kloosterman sheaves for reductive groups constructed in \cite{HNY}. Our proof of physically rigidity uses Beilinson-Drinfeld's construction of Hecke eigensheaves via quantization of Hitchin integrable systems, which exists over an algebraically closed field of characteristic zero. Thus, our method cannot be directly applied to Kloosterman sheaves for reductive groups, which are $\ell$-adic $\LG$-local systems on $\bGm$ over a finite field. However, in \cite{XuZhu}, Xu and Zhu established a bridge between the $\ell$-adic Kloosterman sheaf and Frenkel-Gross connection by constructing the $p$-adic companion of these two local systems, which they call Bessel $F$-isocrystal for reductive groups. It has the same Frobenius eigenvalues as Kloosterman sheaf for reductive groups, and its underlying connection has the same equation as Frenkel-Gross connection. This provides the possibility of using the proof in the current paper to prove the physically rigidity of Kloosterman sheaf for reductive groups through the $p$-adic method. This is the subject of a project in progress with Tsao-Hsien Chen.

\subsubsection{The physically rigidity of other rigid $\LG$-connections} 
Our proof of physically rigidity requires two ingredients: a rigid automorphic datum, which associates a Hecke eigensheaf whose eigenvalue gives the connection; and an alternative construction of the eigensheaf via quantization of Hitchin system. With these two ingredients, it is possible to apply the strategy described in \S\ref{sss:strategy} to other $\LG$-connections. In \cite{KXY}, Kamgarpour, Xu and the author have given these two constructions of Hecke eigensheaves for certain family of $\LG$-connections that generalize hypergeometric sheaves with wild ramification, where $\LG$ is a classical group. Thus one may apply the method in the current paper to prove the physically rigidity of these $\LG$-connections.

\subsubsection{Relation between physically rigidity and cohomologically rigidity}
The theory of rigid automorphic data is developed and studied by Yun in \cite{YunCDM}, based on his joint work with Heinloth and Ng\^o in \cite{HNY}. It provides a method to produce rigid $\LG$-local systems. In \cite[\S1.3.1.]{HNY}, Yun conjectured that weak rigid automorphic data produce cohomologically rigid local systems, while strong automorphic rigid data produce physically rigid local systems. The method in this paper provides a potential way to study the relation between two notions of rigidity for $\LG$-local systems beyond type A and rigid automorphic data.

\subsection{Notations}
Through the paper, we will let $G$ be a simply-connected simple algebraic group over $\bC$ of rank $\ell$. We fix a maximal torus $T$ of $G$ and a Borel subgroup $B$ containing $T$. Let $N$ be the unipotent radical of $B$. Let $B^\opp$ be the opposite Borel. Denote the corresponding sets of simple roots, positive roots and roots by $\Delta\subset\Phi^+\subset\Phi\subset X^*(T)$. Denote the Lie algebras of $G, T, N, B$ by $\fg, \ft, \fn, \fb$. For each root $\alpha\in\Phi$, denote the root subspace by $\fg_\alpha$. Let $\LG$ be the Langlands dual group of $G$, with Lie algebra $\Lg$. The corresponding dual objects are denoted by $\cT, \ct, \cN, \cn, \cB, \cb, \cDelta, \cPhi^+, \cPhi$. 

Let $\bP^1$ be the complex projective line, $t$ a coordinate on it, and $F=k(t)$ the function field. For any $x\in\bP^1$, let $t_x$ be a coordinate around $t_x$, and let $D_x,D_x^\times$ be the formal disk and punctured formal disk. Let $F_x=\bC(\!(t_x)\!)$ and $\cO_x=\bC[\![t_x]\!]$ be the local field and ring of integers.

\textbf{Acknowledgement.} 
The author thanks Alexander Beilinson, Tsao-Hsien Chen and Xinwen Zhu for very valuable discussions. The author particularly thanks Zhu for pointing out Lemma \ref{l:Loc Weyl}, and thanks Chen for suggesting the key reference \cite{FFR}.

\section{The localization functor}
A key observation in Beilinson-Drinfeld's construction of Hecke eigensheaves is that the localization of vacuum module is the sheaf of twisted differential operators on $\Bun_G$. Since we do not know whether a connection that is locally isomorphic to Frenkel-Gross connection has global oper structure, we cannot directly apply their construction. Following \cite[\S9.6]{FrenkelLecture} (see also \cite[\S5]{FrenkelBethe}), we can resolve this issue using Weyl modules, which are generalization of vacuum module that are still $G(\cO)$-integrable. In this section, we compute the localization of Verma modules and Weyl modules in our situation.

\subsection{Recollection on opers and some modules of affine Kac-Moody algebra}
\subsubsection{}\label{sss:hg mods}
Let $\hg$ be the affine Kac-Moody algebra for $\fg$ of critical level, which is a central extension of $\fg(\!(t)\!)$ with central element $\bone$. As vector space, $\hg=\bC\bone+\fg(\!(t)\!)$. The vacuum module is the induced representation
\begin{equation}\label{eq:vac}
\Vac:=\Ind_{\gO+\bC\bone}^{\hg} \bC
\end{equation}
where $\bC$ denotes the trivial represenation of $\gO+\bC\bone$ on which $\bone$ acts as identity.

For any weight $\lambda\in\ft^*$, let $M_\lambda$ be the Verma module of $\fg$ with highest weight $\lambda$. The Verma module of $\hg$ with highest weight $\lambda$ is
\begin{equation}\label{eq:verma}
\bM_\lambda:=\Ind_{\gO+\bC\bone}^{\hg} M_\lambda
\end{equation} 
where $\gO$ acts on $M_\lambda$ through evaluation $t\mapsto 0$, and $\bone$ acts as identify.

For any dominant integral weight $\lambda\in\ft^*$, let $V_\lambda$ be the finite dimensional irreducible representation of $\fg$ with highest weight $\lambda$. The Weyl module of $\hg$ with highest weight $\lambda$ is
\begin{equation}\label{eq:weyl}
\bV_\lambda:=\Ind_{\gO+\bC\bone}^{\hg} V_\lambda
\end{equation}
where $\gO$ acts through $\fg$ and $\bone$ acts as identity.

We will need another module. Let $I(2)\subset G(\!(t)\!)$ be the subgroup of Iwahori as defined in \S\ref{sss:review on FG}. Denote its Lie algebra by $\fI(2)\subset\gO$. We consider the representation induced from the trivial representation of $\fI(2)$:
\begin{equation}\label{eq:vac i2}
\Vac_{\fI(2)}:=\Ind_{\fI(2)+\bC\bone}^{\hg}\bC.
\end{equation}

\subsubsection{}\label{sss:opers}
Let $\Ug$ be the quotient by $\bone-1$ of the completion of the universal enveloping algebra of $\hg$. Its center $\fZ:=Z(\Ug)$, as well as the central support of modules defined in \eqref{eq:vac},\eqref{eq:verma} and \eqref{eq:weyl}, can be described in terms of opers of the dual group $\LG$. 

Let $X$ be either a smooth curve, a formal disk $D=\Spec(\bC[\![t]\!])$, or a punctured disk $D^\times=\Spec(\bC(\!(t)\!))$. An oper is a pair $(\nabla,\cF_B)$ where $\nabla$ is a connection on a $\LG$-bundle $\cF_G$ over $X$, and $\cF_B$ is a Borel reduction of $\cF_G$ satisfying a transversality condition, see \cite[\S3]{BD} or \cite[\S1.1]{FGLocal} for the full definition. Denote the space of opers on $X$ by $\Op_{\Lg}(X)$. Since we assume $\LG$ is of adjoint type, $\Op_{\Lg}(X)$ is a scheme. When $X$ is affine, opers on $X$ can be parametrized by the so-called \emph{oper canonical form}. Here we describe it for $X=D^\times$.

Let $\Lg=\cn^-\oplus\ct\oplus\cn$ be the Cartan decomposition. Denote the half sum of positive coroots by $\crho$. Let $p_{-1}=\sum_{\alpha\in\cDelta}E_{-\alpha}\in\fn^-$ be a regular nilpotent element, and $\{p_{-1},2\crho,p_1\}$ the principal $\Sl_2$-triple where $p_1\in\fn$. Let $\{p_1,p_2,...,p_\ell\}$ be a basis of $\cn^{p_1}$ that are homogeneous with respect to the grading define by $\ad(\check{\rho})$. Let $d_1\leq d_2\leq\cdot\leq d_\ell$ be the fundamental degrees of $\LG$, then $\deg(p_i)=d_i-1$. A $\LG$-oper on $D^\times$ can be uniquely represented by a connection form as follows:
\begin{equation}\label{eq:oper canonical form}
\nabla=\td+(p_{-1}+\sum_{i=1}^\ell v_i(t)p_i)\td t,\quad v_i(t)\in\bC(\!(t)\!).
\end{equation}

Different opers may have the same underlying connection, i.e. their canonical forms are equivalent under gauge transformation.

We consider the following closed subschemes of $\Op_{\Lg}(D^\times)$:
\begin{itemize}
\item $\Op_{\Lg}(D)$: the opers on $D$.
\item $\Op_{\Lg}^{\RS}(D)_{\varpi(\lambda)}$: the opers on $D^\times$ with regular singularity at $0$ whose residue equals to $\lambda\in\ct/\!/W$, $W$ the Weyl group of $\LG$, see \cite[\S2.4]{FGLocal}.
\item $\Op_{\Lg}^{\reg}(D)_{\varpi(\lambda)}$: the opers in $\Op_{\Lg}^{\RS}(D)_{\varpi(\lambda)}$ that can be $\LG(\!(t)\!)$-gauge transformed to the trivial connection, see \cite[\S2.9]{FGLocal}.
\item $\Op_{\Lg}(D)_{\leq 1/h}$: the opers on $D^\times$ whose slope is smaller or equal to $1/h$, $h=d_\ell$ the Coxeter number of $\LG$; see \cite[\S2.7]{CK} for the notion of slope of opers.
\end{itemize}

\subsubsection{}\label{sss: support of center via opers}
The Feigin-Frenkel isomorphism \cite{FF} gives
\begin{equation}\label{eq:FF}
\fZ\simeq\Fun\Op_{\Lg}(D^\times).
\end{equation}

Thus we can describe the support of the actions of $\fZ$ on the $\hg$-modules defined in \S\ref{sss:hg mods} using the closed subspaces of $\Op_{\Lg}(D^\times)$ defined in \S\ref{sss:opers}. Moreover, note from \eqref{eq:oper canonical form} that $\Op_{\Lg}(D^\times)$ is an affine space. Denote $v_i(t)=\sum_j v_{ij}t^{-j-1}\in\bC(\!(t)\!)$, then $v_{ij}\in\Fun\Op_{\Lg}(D^\times)$ maps to a set of generators $S_{ij}$ of $\fZ$ under \eqref{eq:FF}, called Segal-Sugawara operators. See \cite{MolevBook} for an introduction of Segal-Sugawara operators.

Let $\fz$ be the center of $\Vac$ as the affine Vertex algebra. By the Feigin-Frenkel isomorphism over formal disk, we also have
\begin{equation}\label{eq:FF unr}
\Fun\Op_{\Lg}(D)\simeq\fz=\im(\fZ\ra\End_{\hg}\Vac)=\End_{\hg}\Vac.
\end{equation}

Let $\fZ^{\RS}_{\varpi(-\lambda-\rho)}:=\im(\fZ\ra\End_{\hg}\bM_\lambda)$, where $\rho$ is the half sum of positive roots. By \cite[Corollary 13.3.2]{FGLocal}, we have
\begin{equation}\label{eq:FF RS}
\Fun\Op_{\Lg}^{\RS}(D)_{\varpi(-\lambda-\rho)}\simeq\fZ^{\RS}_{\varpi(-\lambda-\rho)}\simeq\End_{\hg}\bM_\lambda.
\end{equation}

Let $\fZ^{\reg}_{\varpi(-\lambda-\rho)}:=\im(\fZ\ra\End_{\hg}\bV_\lambda)$. By \cite[Theorem 1]{FGWeyl}, we have
\begin{equation}\label{eq:FF reg}
\Fun\Op_{\Lg}^{\reg}(D)_{\varpi(-\lambda-\rho)}\simeq\fZ^{\reg}_{\varpi(-\lambda-\rho)}=\End_{\hg}\bV_\lambda.
\end{equation}

Let $\fZ_{\fI(2)}:=\im(\fZ\ra\End_{\hg}\Vac_{\fI(2)})$. By \cite[\S5.2]{Zhu}, we have
\begin{equation}\label{eq:FF i(2)}
\Fun\Op_{\Lg}(D)_{\leq 1/h}\simeq\fZ_{\fI(2)}.
\end{equation}

\subsection{Construction of D-modules via localization functor}\label{ss:Loc}
Let $\cG$ be a smooth affine group scheme over $\bP^1$ such that over some open dense subset $U\subset\bP^1$, $\cG|_U=G\times U$; at each point $x\in \bP^1-U$, $\cG(\cO_x)$ is either a parahoric group, or a pro-unipotent subgroup of a parahoric subgroup that contains a Moy-Prasad subgroup of it. Morever, we assume the moduli stack $\Bun_\cG$ of $\cG$-bundles on $\bP^1$ is good in the sense of Beilinson-Drinfeld \cite[\S1.1.1]{BD}. We also assume the canonical bundle $\omega_{\Bun_\cG}$ on $\Bun_{\cG}$ admits a square root line bundle $\omega_{\Bun_\cG}^{1/2}$. All of these assumptions are satisfied by the group scheme $\cG$ defined in \eqref{eq:cG}.

\subsubsection{}
For each $x\in\bP^1$, let $\Bun_{\cG,x}$ be the moduli stack of $\cG$-bundles with a trivialization over $D_x$. Thus $\pi:\Bun_{\cG,x}\ra\Bun_\cG$ is a $\cG(\cO_x)$-bundle. Denote $K_x=\cG(\cO_x)$, then the pullback of $\omega_{\Bun_\cG}^{1/2}$ to $\Bun_{\cG,x}$ admits a $(\hg_x,K_x)$-actions. Denote by $\cD_{\Bun_\cG}'$ (resp. $\cD_{\Bun_{\cG,x}}'$) the sheaf of differential operators on $\Bun_\cG$ twisted by $\omega_{\Bun_\cG}^{1/2}$ (resp. on $\Bun_{\cG,x}$ twisted by the pullback of $\omega_{\Bun_\cG}^{1/2}$). For any $(\hg_x,K_x)$-module $M$, we associate a twisted D-module on $\Bun_{\cG,x}$ and $\Bun_\cG$ by the standard localization construction:
\begin{equation}\label{eq:Loc}
\tLoc(M):=\cD_{\Bun_{\cG,x}}'\otimes_{U(\hg_x)}M,\quad \Loc(M):=(\pi_*(\tLoc(M)))^{K_x}.
\end{equation} 
For generality on the localization construction, see \cite[\S17.2]{FB} for a detailed introduction.

It is well-known that for $x\in U$ and $M=\Vac$, $\Loc(\Vac)=\cD_{\Bun_\cG}$.

\subsubsection{}
Assume for some $x\in\bP^1-U$, $\cG(\cO_x)=I$ is an Iwahori subgroup. We compute $\Loc(\bM_\lambda)$ for an integral weight $\lambda$. Denote by $\cG'$ the group scheme that is the same as $\cG$ on $\bP^1-\{x\}$ and $\cG'(\cO_x)=I(1)$ is the pro-unipotent radical of $I$. Thus $\pi_1:\Bun_{\cG,x}\ra\Bun_{\cG'}$ is a $I(1)$-bundle, and $\pi_2:\Bun_{\cG'}\ra\Bun_\cG$ is a $T$-bundle. Since $\lambda$ is integral and $G$ is simply connected, it gives rise to a character $\lambda:T\ra\bGm$ which we still denote by $\lambda$. Consider the associated line bundle
\begin{equation}\label{eq:cL_lambda}
\cL_\lambda=\Bun_{\cG'}\times^{T,\lambda}\bGm
\end{equation}
over $\Bun_\cG$. Following the discussion in \cite[\S9.5]{FrenkelRam}, we have

\begin{lem}\label{l:Loc Verma}
For an integral weight $\lambda$ and $x\in\bP^1-U$ where $\cG(\cO_x)=I$, $\Loc(\bM_\lambda)\simeq\cD_{\Bun_\cG}'\otimes_{\cO_{\Bun_\cG}}\cL_\lambda$.
\end{lem}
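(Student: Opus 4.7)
The strategy is to recognize $\bM_\lambda$ as induced from the Iwahori subalgebra $\fI + \bC\bone$ rather than from $\gO + \bC\bone$, so that the weight $\lambda$ can be transported through the two-step torsor $\Bun_{\cG,x} \xrightarrow{\pi_1} \Bun_{\cG'} \xrightarrow{\pi_2} \Bun_\cG$. First I would rewrite $M_\lambda = U(\fg) \otimes_{U(\fb)} \bC_\lambda$ and, using $\fI = \fb + t\gO$, combine this with \eqref{eq:verma} to obtain
$$\bM_\lambda \simeq U(\hg) \otimes_{U(\fI + \bC\bone)} \bC_\lambda,$$
where $\bC_\lambda$ is extended to an $\fI$-module by letting the pro-unipotent radical $I(1)$ act by zero. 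Since $\lambda$ is integral and $G$ is simply connected, this integrates to an $(\fI, I)$-module.

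By the definition of $\tLoc$ and transitivity of extension of scalars,
$$\tLoc(\bM_\lambda) \simeq \cD'_{\Bun_{\cG,x}} \otimes_{U(\fI + \bC\bone)} \bC_\lambda,$$
where the $\fI$-action on the left factor is induced from the infinitesimal right action of $K_x = I$ on $\Bun_{\cG,x}$, together with the critical-level twist coming from $\omega^{1/2}_{\Bun_\cG}$. I would then descend in two stages. Since $\pi_1$ is an $I(1)$-torsor and $I(1)$ acts trivially on $\bC_\lambda$, pushforward followed by $I(1)$-invariants gives
$$(\pi_{1,*} \tLoc(\bM_\lambda))^{I(1)} \simeq \cD'_{\Bun_{\cG'}} \otimes_{U(\ft + \bC\bone)} \bC_\lambda.$$
Next, since $\pi_2$ is a $T$-torsor, the pushforward has a weight decomposition $\pi_{2,*} \cO_{\Bun_{\cG'}} = \bigoplus_\mu \cL_\mu$; tensoring with $\bC_\lambda$ and taking $T$-invariants isolates the $\lambda$-isotypic component, yielding
$$\Loc(\bM_\lambda) = \bigl(\pi_{2,*}(\pi_{1,*} \tLoc(\bM_\lambda))^{I(1)}\bigr)^T \simeq \cD'_{\Bun_\cG} \otimes_{\cO_{\Bun_\cG}} \cL_\lambda.$$

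The main obstacle is the second step: carefully checking that the $\fI$-action coming from $\hg$ agrees, up to the $\omega^{1/2}$ twist, with the infinitesimal $K_x$-action, so that the descent through $\pi_1$ is a genuine $\cD'$-module identity and not merely an abstract sheaf isomorphism. One must also keep sign and shift conventions consistent so that the final answer is twisted by $\cL_\lambda$ rather than by $\cL_{-\lambda}$ or a $\rho$-shifted variant; this is sensitive to the precise normalizations in the Harish-Chandra pair $(\hg, I)$ and in the definition \eqref{eq:cL_lambda} of $\cL_\lambda$. Modulo these bookkeeping points, the argument is the natural extension of the unramified identity $\Loc(\Vac) = \cD'_{\Bun_\cG}$, with the highest weight $\lambda$ tracked through the $T$-torsor $\pi_2$.
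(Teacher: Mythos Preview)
Your proposal is correct and follows essentially the same two-step descent through $\Bun_{\cG,x} \xrightarrow{\pi_1} \Bun_{\cG'} \xrightarrow{\pi_2} \Bun_\cG$ as the paper, arriving at $(\pi_{2*}(\cD'_{\Bun_{\cG'}} \otimes_{U\ft} \bC_\lambda))^T$ after the $I(1)$-descent. The only cosmetic difference is in the final $T$-descent: the paper trivializes the $T$-torsor locally to reduce to $(\cD_T \otimes_{U\ft} \bC_\lambda)^T \simeq \bC_\lambda$ and then glues, whereas you invoke the weight decomposition of $\pi_{2,*}\cO_{\Bun_{\cG'}}$ directly; the local computation is what actually justifies that the $\cD'$-part descends to $\cD'_{\Bun_\cG}$, so you may want to make that step explicit.
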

\begin{proof}
Since $\bM_\lambda\simeq U(\hg)/U(\hg)\fI(1)\otimes_{U\ft}\bC_\lambda$ and $\pi=\pi_2\circ\pi_1$, we have
$$
\Loc(\bM_\lambda)=(\pi_{2*}(\cD_{\Bun_{\cG'}}'\otimes_{U\ft}\bC_\lambda))^T.
$$
On any neighborhood $\cY$ in $\Bun_\cG$ over which the $T$-bundle $\Bun_{\cG'}\ra\Bun_\cG$ can be trivialized, 
$$
\Loc(\bM_\lambda)|_{\cY}\simeq\cD_{\cY}'\otimes_{\bC}(\pi_{2*}(\cD_T\otimes_{U\ft}\bC_\lambda))^T\simeq\cD_{\cY}'\otimes_{\bC}\bC_\lambda.
$$ 
The lemma follows from gluing the above isomorphism together.
\end{proof}

\subsubsection{}
Next consider any $x\in U$, where $\cG(\cO_x)=G(\cO_x)$. We compute $\Loc(\bV_\lambda)$ for a dominant integral weight $\lambda$ in a similar way. Denote by $\Bun_{\cG,\bar{x}}$ the moduli space of $\cG$-bundles with a trivilization of its fiber at $x$. Then $\pi_1:\Bun_{\cG,x}\ra\Bun_{\cG,\bar{x}}$ is a $G(t_x\cO_x)$-bundle where $G(t_x\cO_x)=\ker(G(\cO_x)\ra G(\bC))$, and $\pi_2:\Bun_{\cG,\bar{x}}\ra\Bun_\cG$ is a $G$-bundle. Consider associated vector bundle
\begin{equation}\label{eq:V_lambda}
\cV_\lambda=\Bun_{\cG,\bar{x}}\times^G V_\lambda
\end{equation}
on $\Bun_\cG$. We have
\begin{lem}\label{l:Loc Weyl}
For a dominant integral weight $\lambda$ and $x\in U$ where $\cG(\cO_x)=G(\cO_x)$, $\Loc(\bV_\lambda)\simeq\cD_{\Bun_\cG}'\otimes_{\cO_{\Bun_\cG}}\cV_\lambda$. 
\end{lem}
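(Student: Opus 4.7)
The plan is to follow exactly the same template as the proof of Lemma \ref{l:Loc Verma}, with the torus $T$ replaced by the full group $G$ and the character $\lambda : T \to \bGm$ replaced by the representation $V_\lambda$ of $G$. The key input is the tensor identity
\[
\bV_\lambda \;\simeq\; U(\hg_x)/U(\hg_x)\fg(t_x\cO_x) \otimes_{U\fg} V_\lambda,
\]
which follows immediately from the definition \eqref{eq:weyl}: since $\fg(t_x\cO_x)$ acts trivially on $V_\lambda$ and $\bone$ acts as the identity, the induction can be performed in two stages, first along $\fg(t_x\cO_x)+\bC\bone$ and then along $\fg$.

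Next I would factor the projection $\pi : \Bun_{\cG,x} \to \Bun_\cG$ as $\pi = \pi_2 \circ \pi_1$, where $\pi_1 : \Bun_{\cG,x} \to \Bun_{\cG,\bar x}$ is a $G(t_x\cO_x)$-bundle and $\pi_2 : \Bun_{\cG,\bar x} \to \Bun_\cG$ is a $G$-bundle. Because $G(t_x\cO_x)$ is pro-unipotent, taking its invariants in $\pi_{1*}(\tLoc(\bV_\lambda))$ is the same as killing the action of its Lie algebra $\fg(t_x\cO_x)$ on $\bV_\lambda$. Using the two-step induction above, this yields
\[
(\pi_{1*}\tLoc(\bV_\lambda))^{G(t_x\cO_x)} \;\simeq\; \cD_{\Bun_{\cG,\bar x}}' \otimes_{U\fg} V_\lambda.
\]

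It then remains to take $G$-invariants under $\pi_2$. Over any open $\cY \subset \Bun_\cG$ on which the $G$-bundle $\Bun_{\cG,\bar x} \to \Bun_\cG$ trivializes, one obtains
\[
\Loc(\bV_\lambda)|_\cY \;\simeq\; \cD_\cY' \otimes_\bC \bigl(\pi_{2*}(\cD_G \otimes_{U\fg} V_\lambda)\bigr)^G \;\simeq\; \cD_\cY' \otimes_\bC V_\lambda,
\]
using that $V_\lambda$ is a $G$-representation (here dominance and integrality of $\lambda$ are essential, as in Lemma \ref{l:Loc Verma} they were essential for $\lambda$ to exponentiate to a character of $T$). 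These local isomorphisms glue into the global isomorphism $\Loc(\bV_\lambda) \simeq \cD_{\Bun_\cG}' \otimes_{\cO_{\Bun_\cG}} \cV_\lambda$ precisely because the transition functions on both sides are given by the same $G$-cocycle twisting, recovering the associated bundle \eqref{eq:V_lambda}.

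The only non-routine point is the second step: one must be sure that the pro-unipotent quotient and the tensor product commute with the $\cD_{\Bun_{\cG,x}}'$-module structure, i.e.\ that $\cD_{\Bun_{\cG,x}}' \otimes_{U(\hg_x)} (U(\hg_x)/U(\hg_x)\fg(t_x\cO_x))$ descends to $\cD_{\Bun_{\cG,\bar x}}'$. This is a standard consequence of the weak action of $\hg_x$ on $\Bun_{\cG,x}$ (as in \cite[\S17.2]{FB}) together with the fact that the $\fg(t_x\cO_x)$-action integrates to the $\pi_1$-fiberwise action of $G(t_x\cO_x)$; I would cite this and avoid reproducing the calculation. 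Once that is granted, both steps are formally identical to the Verma module argument, the only replacement being $\bC_\lambda \rightsquigarrow V_\lambda$ and $T \rightsquigarrow G$.
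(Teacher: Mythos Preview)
Your proposal is correct and follows essentially the same approach as the paper: the same two-step induction for $\bV_\lambda$, the same factorization $\pi=\pi_2\circ\pi_1$, and the same local-to-global gluing. The only difference is that the paper spells out the isomorphism $(\pi_{2*}(\cD_G\otimes_{U\fg}V_\lambda))^G\simeq V_\lambda$ explicitly via $\cD_G\otimes_{U\fg}V_\lambda\simeq\cO_G\otimes_\bC V_\lambda$ and Peter--Weyl ($(\cO_G\otimes V_\lambda)^G\simeq\Hom_G(V_\lambda^*,\cO_G)\simeq V_\lambda$), whereas you invoke it in one line.
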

\begin{proof}
Since $\bV_\lambda\simeq U(\hg)/U(\hg)t_x\fg[\![t_x]\!]\otimes_{U\fg}V_\lambda$ and $\pi=\pi_2\circ\pi_1$, we have
$$
\Loc(\bV_\lambda)=(\pi_{2*}(\cD_{\Bun_{\cG,\bar{x}}}'\otimes_{U\fg}V_\lambda))^G.
$$
On any neighborhood $\cY$ in $\Bun_\cG$ over which the $G$-bundle $\Bun_{\cG,\bar{x}}\ra\Bun_\cG$ can be trivialized,
\begin{align*}
\Loc(\bV_\lambda)|_{\cY}
&\simeq\cD_{\cY}'\otimes_{\bC}(\pi_{2*}(\cD_G\otimes_{U\fg}V_\lambda))^G
\simeq\cD_{\cY}'\otimes_{\bC}(\cO_G\otimes_{\bC}V_\lambda)^G\\
&\simeq \cD_{\cY}'\otimes_{\bC}\mathrm{Hom}_G(V_\lambda^*,\cO_G)
\simeq\cD_{\cY}'\otimes_{\bC}V_\lambda.
\end{align*}
The lemma follows from gluing the above isomorphism together.
\end{proof}

Later we will only use that $\Loc(\bM_\lambda)$ (resp. $\Loc(\bV_\lambda)$) is locally isomorphic to $\cD'\otimes_{\bC}\bC_\lambda$ (resp. $\cD'\otimes_{\bC}V_\lambda$).

\section{A flatness property}\label{s:flatness}
Let $\cG$ be the group scheme defined in \eqref{eq:cG}, i.e. $\cG(\cO_0)=I^\opp$, $\cG(\cO_\infty)=I(2)$, and $\cG|_{\bGm}=G\times\bGm$. From \cite[Lemma 17, Remark 6.1]{Zhu}, we know it satisfies all the assumptions in \S\ref{ss:Loc}. Let $z_1,z_2,...,z_k$ be a finite subset of $\bGm$. Denote $U=\bGm-\{z_1,...,z_k\}$, $S=\{0,z_1,...,z_k,\infty\}$. For $z=0$, let $\lambda_0$ be an integral weight; for each $z_i$, let $\lambda_i$ be a dominant integral weight. 

Denote $\hg_*:=(\oplus_{z\in S}\hg_z)/\langle\bone_z-\bone_0\rangle$ and $K_*=\prod_{z\in S}\cG(\cO_z)$. The multiple points version of the localization construction produces a twisted D-module on $\Bun_\cG$ from a $(\hg_*,K_*)$-module. From Lemma \ref{l:Loc Verma}, Lemma \ref{l:Loc Weyl} and that $\Loc(\Vac_{\fI(2)})=\cD_{\Bun_\cG}'$ (see \cite[\S3.1]{Zhu} for the last equality), we have
\begin{equation}\label{eq:global loc}
\cD'_{z_i,\lambda_i}:=\Loc(\bM_{\lambda_0}^\opp\otimes_i\bV_{\lambda_i}\otimes\Vac_{\fI(2)})
\simeq\cD_{\Bun_\cG}'\otimes\cL_{\lambda_0}\otimes_i\cV_{\lambda_i}.
\end{equation}

In the above, $\bM_{\lambda_0}^\opp$ is the Verma module for the opposite Borel $B^\opp$, and $\Vac_{\fI(2)}$ defined in \eqref{eq:vac i2} is the vacuum module for $(\hg_\infty,I(2))$. In the following, we define an algebra $A$ over which $\cD'_{z_i,\lambda_i}$ is flat.

\subsection{Flatness over an algebra}\label{ss:flat over A}
The localization functor $\Loc$ is functorial, sending an endomorphism of a $(\hg_x,K_x)$-module $V$ to an endomorphism of the resulting twisted D-module on $\Bun_\cG$. Thus we get a morphism
$$
\fZ\ra\End_{\hg_x}(V)\ra\End_{\cD'}(\Loc(V)).
$$ 

At $z=0$, let $w_0\in G(\bC)$ be a representative of the longest element of Weyl group. Consider a new $\hg_0$-module structure on $\bM_{w_0\lambda_0}$ by $X\cdot v=\Ad_{w_0}(X)v$, then $\bM_{\lambda_0}^\opp$ is isomorphic to $\bM_{w_0\lambda_0}$ as $\hg_0$-modules, thus have the same central support in $\fZ$. From \eqref{eq:FF RS}, we get an action of $\Fun\Op_{\Lg}^\RS(D_0)_{\varpi(-w_0\lambda_0-\rho)}$ on $\cD'_{z_i,\lambda_i}$.

At $z=z_i$, $i=1,...,k$, we get from \eqref{eq:FF reg} an action of $\Fun\Op_{\Lg}^\reg(D_{z_i})_{\varpi(-\lambda_i-\rho)}$ on $\cD'_{z_i,\lambda_i}$.

At $z\in U$, since $\Loc(\bM_{\lambda_0}^\opp\otimes_i\bV_{\lambda_i}\otimes\Vac_{\fI(2)})$ would not change if we add the point $z$ and tensor with vacuum module $\Vac_z$, we get from \eqref{eq:FF unr} an action of $\Fun\Op_{\Lg}(D_z)$ on $\cD'_{z_i,\lambda_i}$. 

At $z=\infty$, we get from \eqref{eq:FF i(2)} an action of $\Fun\Op_{\Lg}(D_\infty)_{\leq 1/h}$ on $\cD'_{z_i,\lambda_i}$. Moreover, denote $V=I(1)/I(2)\simeq\fI(1)/\fI(2)$, which is acted on by $T$. We see from \cite[Proposition 15]{Zhu}\footnote{This proposition can be proved alternatively using Segal-Sugawara operators, see \cite[Proposition 28.(i)]{KXY}.} that both $U(V)$ and $\fZ_{\fI(2)}=\Fun\Op_{\Lg}(D_\infty)_{\leq 1/h}$ act on $\Vac_{\fI(2)}$, and their intersection inside $\End\Vac_{\fI(2)}$ is $U(V)^T$. Thus we also get an action of $U(V)$ on $\cD'_{z_i,\lambda_i}$ that is compatible with the action of $\Fun\Op_{\Lg}(D_\infty)_{\leq 1/h}$ over $U(V)^T$.

Now consider the following space of global opers:
\begin{equation}\label{eq:Op_*}
\begin{split}
\Op_*:=&\Op_{\Lg}(U)\times_{\Op_{\Lg}(D_0^\times)}\Op_{\Lg}^\RS(D_0)_{\varpi(-w_0\lambda_0-\rho)}\times_{i,\Op_{\Lg}(D_{z_i}^\times)}\Op_{\Lg}^\reg(D_{z_i})_{\varpi(-\lambda_i-\rho)}\\
&\times_{\Op_{\Lg}(D_\infty^\times)}\Op_{\Lg}(D_\infty)_{\leq 1/h}.
\end{split}
\end{equation}

We can assemble the above actions of quotients of $\fZ$ at each point $x\in\bP^1$ together using the technique of conformal blocks as in \cite[\S2.4]{Zhu}. From the above discussion, we get an action of $\Fun\Op_*$ on $\cD'_{z_i,\lambda_i}$. Since the action of $\Fun\Op_{\Lg}(D_\infty)_{\leq 1/h}\simeq\fZ_{\fI(2)}\supset U(V)^T$ extends to $U(V)$, we get further the action of
\begin{equation}\label{eq:A}
A:=\Fun\Op_*\otimes_{U(V)^T}U(V)
\end{equation}
on $\cD_{z_i,\lambda_i}'$.

The following is a key ingredient to the proof of Theorem \ref{t:main}:
\begin{prop}\label{p:flatness}
$\cD_{z_i,\lambda_i}'$ is flat over $A$.
\end{prop}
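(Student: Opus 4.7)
The plan is to deduce flatness by passing to associated gradeds and reducing to a geometric flatness statement for a parahoric Hitchin map on $\Bun_\cG$. First I would equip both $A$ and $\cD'_{z_i,\lambda_i}$ with compatible filtrations. The PBW filtration on $\Ug$ induces a filtration on $\fZ$ whose associated graded, via the classical limit of the Feigin-Frenkel isomorphism, is the ring of functions on the space of $\Lg$-Higgs fields on $D^\times$. Restricting to each factor of $\Op_*$ appearing in \eqref{eq:Op_*} and assembling via the conformal blocks construction of \cite[\S2.4]{Zhu}, this endows $\Fun\Op_*$ with a filtration whose associated graded is identified with $\Fun\Hit_*$, the global Hitchin base parametrizing $\Lg$-Higgs fields on $\bP^1$ with the analogous conditions: regular singularity at $0$ with residue in the orbit of $-w_0\lambda_0-\rho$, appropriate regularity at each $z_i$ compatible with $\lambda_i$, and slope $\leq 1/h$ at $\infty$. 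Combined with the PBW filtration on $U(V)$, we obtain a filtration on $A$ with $\gr A \simeq \Fun\Hit_*\otimes_{\Sym(V)^T}\Sym(V)$.

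On the $\cD$-module side, filter $\cD'_{z_i,\lambda_i}$ by the order of differential operators. The standard symbol computation yields $\gr\cD_{\Bun_\cG}' \simeq p_*\cO_{T^*\Bun_\cG}$, where $p: T^*\Bun_\cG \to \Bun_\cG$ is the natural projection; twisting by the locally free sheaves $\cL_{\lambda_0}$ and $\cV_{\lambda_i}$ only modifies this by the pullback $p^*(\cL_{\lambda_0}\otimes_i\cV_{\lambda_i})$. The induced map $\gr A \to \gr\cD'_{z_i,\lambda_i}$ factors through the parahoric Hitchin map $h: T^*\Bun_\cG \to \Hit_*$, with the $U(V)$ factor corresponding to the natural $V$-action on $T^*\Bun_\cG$ at $\infty$ induced from $I(1)/I(2)\simeq V$. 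Flatness of the original module therefore reduces, via the standard lemma that flatness of the associated graded lifts to flatness of the filtered module (for exhaustive, separated, compatible filtrations), to flatness of $p_*\bigl(p^*(\cL_{\lambda_0}\otimes_i\cV_{\lambda_i})\bigr)$ over $\gr A$.

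The key geometric input is then the flatness of $h$ in this parahoric setting, extending the Hitchin integrability analysis of \cite[\S2]{Zhu} to incorporate the residue conditions at $0$ and the $z_i$. Granting this, the local freeness of $\cL_{\lambda_0}$ and $\cV_{\lambda_i}$ together with compatibility of the $V$-action on the two sides of the tensor product over $\Sym(V)^T$ preserves flatness, so we conclude flatness over the full $\gr A$. The main obstacle will be establishing the flatness (or equivalently, equidimensionality of source with a smooth affine base) of the parahoric Hitchin map $h$ in the presence of all three level structures simultaneously; once $\Hit_*$ is correctly identified as an affine space of the expected dimension and the geometry of $T^*\Bun_\cG$ is pinned down (using \cite[Lemma 17]{Zhu} for goodness of $\Bun_\cG$), the remaining steps are essentially formal.
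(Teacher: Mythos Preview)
Your overall strategy of passing to associated gradeds is correct and matches the paper, but the identification of $\gr A$ and the reduction step are where the argument breaks down.

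You describe $\gr\Fun\Op_*$ as $\Fun\Hit_*$ for a parahoric Hitchin base with ``appropriate regularity at each $z_i$ compatible with $\lambda_i$'', and later assume this $\Hit_*$ is an affine space of the expected dimension. This is not so when the $\lambda_i$ are nonzero. The classical limit $\bcZ_{\lambda_i}:=\gr\Fun\Op_{\Lg}^{\reg}(D_{z_i})_{\varpi(-\lambda_i-\rho)}$ is not the function ring of a naive subscheme of the local Hitchin base; by \cite{FFR} it is isomorphic to $\bcZ_0\otimes_{\bC} N_{\lambda_i}$, where $N_{\lambda_i}=\pi_{\lambda_i}(\cA_f)\subset\End_{\bC}V_{\lambda_i}$ is the image of the shift-of-argument subalgebra, a finite-dimensional (generally non-reduced) commutative algebra of dimension $\dim V_{\lambda_i}$. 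Assembling these, the paper's Lemma~\ref{l:grFunOp*} gives $\gr\Fun\Op_*\simeq\Fun(V^*\dl T)\otimes_{\bC,i}N_{\lambda_i}$, so $\Spec(\gr A)\simeq V^*\times_i\Spec N_{\lambda_i}$, which is a finite thickening of $V^*$, not an affine space. In particular there is no flat ``parahoric Hitchin map'' $T^*\Bun_\cG\to\Hit_*$ through which the full action of $\gr A$ factors: the $N_{\lambda_i}$ piece acts on the fibers $V_{\lambda_i}$ of the bundle $\cV'_{\lambda_i}$, not through the base.

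Consequently your step ``local freeness of $\cL_{\lambda_0}$ and $\cV_{\lambda_i}$ preserves flatness'' is insufficient. What is actually needed is that $V_{\lambda_i}$ is a \emph{free rank one} module over $N_{\lambda_i}$, which is the cyclicity result \cite[Corollary~2]{FFR10}. With that in hand, the paper reduces flatness to the already-established flatness of the moment map $T^*\Bun_\cG\to V^*$ from \cite[Lemma~18]{Zhu}; no new Hitchin-map flatness needs to be proved. So the missing ingredient in your plan is precisely the structure of $\bcZ_{\lambda_i}$ and of $V_{\lambda_i}$ as an $N_{\lambda_i}$-module coming from \cite{FFR,FFR10}.
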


We prove this by proving the flatness of the associated graded module with respect to the natural filtration on the sheaf of differential operators and the PBW filtration on the center $\fZ$.

\subsection{The associated graded of $\cD_{z_i,\lambda_i}'$}\label{ss:grD_z,lambda}
We first describe the associated graded of $\cD_{z_i,\lambda_i}'$. For each vector bundle $\cV_{\lambda_i}$ on $\Bun_\cG$ defined by \eqref{eq:V_lambda}, since $\cD_{\Bun_\cG}'\otimes\cV_{\lambda_i}$ is locally copies of $\cD_{\Bun_\cG}'$, its associated graded is a vector bundle $\cV_{\lambda_i}'$ on the cotangent stack $T^*\Bun_\cG$ with fiber $V_{\lambda_i}$, which can be described canonically as follows. 

Recall $\cD_{\Bun_\cG}'\otimes\cV_{\lambda_i}$ is the descent of $\cD_{\Bun_{\cG,\bar{z}_i}}'\otimes_{U\fg}V_{\lambda_i}$. Its associated graded is the $G$-equivariant module $\cO_{T^*\Bun_{\cG,\bar{z}_i}}\otimes_{S\fg}V_{\lambda_i}$ on $T^*\Bun_{\cG,\bar{z}_i}$, where the map $S\fg=\Fun\fg^*\ra\cO_{T^*\Bun_{\cG,\bar{z}_i}}$ comes from the moment map $\mu:T^*\Bun_{\cG,\bar{z}_i}\ra\fg^*$, and the action of $S\fg$ on $V_{\lambda_i}$ factors through the quotient by degree greater than one terms. From Hamiltonian reduction, we have $T^*\Bun_\cG\simeq\mu^{-1}(0)/G$. Denote $p:\mu^{-1}(0)\ra T^*\Bun_\cG$, then
\begin{equation}\label{eq:gr cV_lambda}
\cV_{\lambda_i}'=p_*((\cO_{T^*\Bun_{\cG,\bar{z}_i}}\otimes_{S\fg}V_{\lambda_i})\big|_{\mu^{-1}(0)})^G.
\end{equation}

Note that for $\cL_{\lambda_0}$, the associated graded of $\bC_{\lambda_0}$ becomes the trivial module over $\ft$. Putting the above together, we get
\begin{equation}\label{eq:grD'}
\gr\cD_{z_i,\lambda_i}'\simeq\otimes_i\cV_{\lambda_i}'
\end{equation}
where we omit the pushforward from $T^*\Bun_\cG$ to $\Bun_\cG$ for simplicity of notation.

\subsection{The associated graded of local opers}\label{ss:gr local opers}
Next we describe the associated graded of the ring of functions on various subspaces of local opers defined in \S\ref{sss:opers} in terms of Hitchin base.

\subsubsection{}
Let $X$ be either a smooth curve, a formal disk $D$ or a punctured formal disk $D^\times$. We fix a set of homogeneous invariant polynomials on $\fg$ as in \cite[\S4.1]{Zhu} using Kostant section. The Hitchin base over $X$ is
\begin{equation}
\Hit(X):=\Gamma(X,\fg^*\dl G\times^{\bGm}\omega_X^\times)\simeq\bigoplus_{i=1}^\ell\omega_X^{d_i},
\end{equation}
where $\omega$ is the canonical bundle on $X$. Note that the above isomorphism is non-canonical, depending on our choice of homogeneous invariant polynomials generating $\bC[\fg]^G$. It can be regarded as the classical limit of the space of opers (see \cite[\S3.1.12]{BD}):
\begin{equation}\label{eq:gr global op=Hit}
\gr\Fun\Op_{\Lg}(X)\simeq\Fun\Hit(X).
\end{equation}

For $X=D^\times, D$, we know from \cite[Theorem 2.5.2, Theorem 3.7.8]{BD} that
\begin{equation}\label{eq:gr fZ, fz}
\gr\fZ\simeq\gr\Fun\Op_{\Lg}(D^\times)\simeq\Fun\Hit(D^\times),\quad
\gr\fz\simeq\gr\Fun\Op_{\Lg}(D)\simeq\Fun\Hit(D).
\end{equation}

Also, let $\Op_{\Lg}^\RS(D)$ be the space of opers on $D^\times$ with regular singularity, and $\Hit^\RS(D)=\bigoplus_{i=1}^\ell\omega_D^{d_i}(d_i)$ the Hitchin base with regular singularity, then
\begin{equation}\label{eq:gr op RS}
\gr\Fun\Op_{\Lg}^\RS(D)\simeq\Fun\Hit^\RS(D).
\end{equation}

If we fix a coordinate $t$ on $D^\times$, then we can write a section in $\Hit(D^\times)$ as $\sum_{i=1}^{\ell}h_i(t)(\td t)^{d_i}$, where $h_i(t)=\sum_j h_{ij}t^{-j-1}\in\bC(\!(t)\!)$. Thus
\begin{equation}\label{eq:h_ij}
\Fun\Hit(D^\times)=\bC[h_{ij}\ |\ 1\leq i\leq \ell, j\in\bZ].
\end{equation}
The elements $h_{ij}\in\Fun\Hit(D^\times)$ are the symbols of the Segal-Sugawara operators $S_{ij}\in\fZ$ under $\gr\fZ\simeq\Fun\Hit(D^\times)$ defined in \S\ref{sss: support of center via opers}, thus also correspond to the symbols of coefficeints $v_{ij}$ of the oper canonical form, i.e. $h_{ij}=\bar{v}_{ij}=\bar{S}_{ij}$.

\subsubsection{}
Next we compute $\gr\Fun\Op_{\Lg}^\RS(D)_{\varpi(\lambda)}$ for any $\lambda\in\ct$. Recall the quotient map $\varpi:\Lg\ra\Lg\dl\LG$ is isomorphic on the Kostant section $p_{-1}+\cn^{p_1}$. From \cite[Lemma 2.4.2]{FGLocal}, we have
$$
\Op_{\Lg}^\RS(D)_{\varpi(\lambda)}=\{\td+(p_{-1}+\sum_{i=1}^{\ell}t^{-d_i}(v_{i,d_i-1}+t\bC[\![t]\!])p_i)\td t\ |\ \varpi(p_1+\sum_{i=1}^\ell v_{i,d_i-1}+\frac{p_1}{4})=\varpi(\lambda)\}.
$$
Note that our $d_i$ correspond to $d_i+1$ in the \emph{loc. cit.}. Following the notations in \cite[\S4.3]{Zhu}, we denote
\begin{equation}\label{eq:Hit_I}
\Hit(D)_{\fI}:=\bigoplus_{i=1}^\ell t^{-d_i+1}\bC[\![t]\!](\td t)^{d_i}\subset\Hit(D^\times).
\end{equation}
Then it is clear from $h_{ij}=\bar{v}_{ij}$ and the above discussion that
\begin{equation}\label{eq:gr oper RS lambda}
\gr\Fun\Op_{\Lg}^\RS(D)_{\varpi(\lambda)}\simeq\Fun\Hit(D)_{\fI}=\bC[h_{ij}\ |\ j\leq d_i-2, 1\leq i\leq \ell].
\end{equation}

\subsubsection{}
The algebra $\gr\Fun\Op_{\Lg}(D)_{\leq 1/h}$ has been computed in \cite{Zhu}, where $\Op_{\Lg}(D)_{\leq 1/h}$ is denoted by $\Op_{\Lg}(D)_{\fI(2)}$. Denote
\begin{equation}\label{eq:Hit_I(2)}
\Hit(D)_{\fI(2)}=\bigoplus_{i=1}^{\ell-1}t^{-d_i}\bC[\![t]\!](\td t)^{d_i}\oplus t^{-d_\ell-1}\bC[\![t]\!](\td t)^{d_\ell}.
\end{equation}
From Proposition 13 and Lemma 16 of \emph{loc. cit.}, we get
\begin{equation}\label{eq:gr op 1/h}
\gr\Fun\Op_{\Lg}(D)_{\leq 1/h}\simeq\gr\Fun\Hit(D)_{\fI(2)}=\bC[h_{ij}\ |\ j\leq d_i-1, 1\leq i\leq \ell-1; j\leq d_\ell, i=\ell].
\end{equation}

\subsubsection{}
The algebra $\gr\Fun\Op_{\Lg}^\reg(D)_{\varpi(-\lambda-\rho)}$,
where $\lambda$ is a dominant integral weight, has been computed in \cite[\S2.2]{FFR}. For simplicity of notations, we denote $\cZ_\lambda:=\Fun\Op_{\Lg}^\reg(D)_{\varpi(-\lambda-\rho)}$ and $\bcZ_\lambda=\gr\cZ_\lambda$. Thus $\cZ_0=\Fun\Op_{\Lg}(D)\simeq\fz$, $\bcZ_0=\gr\fz=(\Fun\gO)^{\gO}$. From \emph{loc. cit.}, we have an embedding
$$
\bcZ_\lambda\hookrightarrow(\End_{\bC}(V_\lambda)\otimes\Fun\gO)^{\gO}
$$
which is only an isomorphism when $\lambda$ is zero or minuscule. There is a natural embedding of $\bcZ_0=(\Fun\gO)^{\gO}$ into $(\End_{\bC}(V_\lambda)\otimes\Fun\gO)^{\gO}$. Moreover, Lemma 2 of the \emph{loc. cit.} says that $\bcZ_0\subset\bcZ_\lambda$, and $\bcZ_\lambda$ is a free module over $\bcZ_0$ with a basis that can be described as follows.

For any $\mu\in\fg^*$, one can assign a commutative subalgebra $\cA_\mu\subset U\fg$ called the quantum shift of argument subalgebra, see \cite[\S2.7]{FFT} for the precise definition. Let $\mu$ by the image of the regular nilpotent element $f=p_{-1}$ under isomorphism $\fg\simeq\fg^*$ given by Killing form, then we have a particular subalgebra $\cA_f\subset U\fg$.
Let $\pi_\lambda:U\fg\ra\End_{\bC}V_\lambda$ be the finite dimensional irreducible representation. Denote \begin{equation}\label{eq:N_lambda}
N_\lambda:=\pi_\lambda(\cA_f)\subset\End_{\bC}V_\lambda.
\end{equation}

We know from \cite[Corollary 2]{FFR10} that $V_\lambda$ is cyclic as $\cA_f$-module. Thus $V_\lambda$ is rank one free $N_\lambda$-module. From the discussion in \cite[\S2.2]{FFR}, $N_\lambda$ can be lifted to a subspace in $\bcZ_\lambda$, which we still denote by $N_\lambda$. Then we know from Lemma 4 of the \emph{loc. cit.} that $\bcZ_\lambda$ is a free $\bcZ_0$-module generated by $N_\lambda$:
\begin{equation}\label{eq:bZ_lambda=bZ_0 N_lambda}
\bcZ_\lambda\simeq\bcZ_0\otimes_{\bC}N_\lambda.
\end{equation}

\subsection{The associated graded of $\Fun\Op_*$}
Recall from the discussion in \S\ref{ss:flat over A} that $U(V)^T$ maps to $\Fun\Op_*$, thus $\gr\Fun\Op_*$ is a $\Fun(V^*\dl T)$-module. For each dominant integral weight $\lambda_i$ attached to $z_i$, $i=1,...,k$, we associate a vector space $N_{\lambda_i}$ defined as in \eqref{eq:N_lambda}.
\begin{lem}\label{l:grFunOp*}
$\gr\Fun\Op_*$ is a free $\Fun(V^*\dl T)$-module. Moreover, we have a non-canonical isomorphism of $\Fun(V^*\dl T)$-modules:
\begin{equation}\label{eq:grFunOp*}
\gr\Fun\Op_*\simeq \Fun(V^*\dl T)\otimes_{\bC,i}N_{\lambda_i}.
\end{equation}
\end{lem}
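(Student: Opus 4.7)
The plan is to compute the associated graded directly by passing to the classical Hitchin side and separating off the contributions of each $N_{\lambda_i}$. First, the local graded identifications \eqref{eq:gr global op=Hit}, \eqref{eq:gr op RS}, \eqref{eq:gr oper RS lambda}, \eqref{eq:gr op 1/h}, together with \eqref{eq:bZ_lambda=bZ_0 N_lambda} at each $z_i$, assemble through the fibered product defining $\Op_*$ to yield
\begin{equation*}
\gr\Fun\Op_* \simeq \Fun\Hit_*^{\mathrm{basic}} \otimes_{\bC} \bigotimes_{i=1}^k N_{\lambda_i},
\end{equation*}
where $\Op_*^{\mathrm{basic}}$ denotes the space $\Op_*$ with all $\lambda_i$ replaced by $0$ (equivalently, each $\bcZ_{\lambda_i}$ is replaced by $\bcZ_0 = \Fun\Hit(D_{z_i})$). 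Here I use that $\bcZ_{\lambda_i}\simeq \bcZ_0\otimes_{\bC} N_{\lambda_i}$ is $\bcZ_0$-free of rank $\dim V_{\lambda_i}$ and that the map $\Fun\Hit(D_{z_i}^\times)\to\bcZ_{\lambda_i}$ factors through $\bcZ_0$; both properties let the base change at $z_i$ split off $N_{\lambda_i}$ as a free tensor factor, and the base changes commute with taking associated graded for this fibered product of affine schemes.

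Next, I would compute $\Fun\Hit_*^{\mathrm{basic}}$ explicitly via Riemann--Roch on $\bP^1$. Under the oper canonical form, $\Hit_*^{\mathrm{basic}}$ parametrizes tuples $(h_i(t)(\td t)^{d_i})_{1\leq i\leq\ell}$ of $d_i$-differentials on $\bP^1$ that are holomorphic on $\bGm$, have a pole of order at most $d_i-1$ at $0$, and have a pole of order at most $d_i$ at $\infty$ for $i<\ell$, respectively at most $d_\ell+1$ for $i=\ell$. Equivalently,
\begin{equation*}
\Hit_*^{\mathrm{basic}} \simeq \bigoplus_{i<\ell} H^0\bigl(\bP^1,\omega_{\bP^1}^{d_i}((d_i-1)\cdot 0+d_i\cdot\infty)\bigr)\oplus H^0\bigl(\bP^1,\omega_{\bP^1}^{d_\ell}((d_\ell-1)\cdot 0+(d_\ell+1)\cdot\infty)\bigr).
\end{equation*}
The line bundle in each $i<\ell$ summand has degree $-2d_i+(d_i-1)+d_i=-1$ and so contributes $0$, whereas the $i=\ell$ summand has degree $0$ and is thus $\cO_{\bP^1}$, contributing $\bC$. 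Hence $\Hit_*^{\mathrm{basic}}\simeq \bA^1$ and $\Fun\Hit_*^{\mathrm{basic}}\simeq \bC[z]$.

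Finally, I would identify $\bC[z]$ with $\Fun(V^*\dl T)$ through the map coming from $U(V)^T\hookrightarrow \Fun\Op_{\Lg}(D_\infty)_{\leq 1/h}\to\Fun\Op_*$. Writing $V=I(1)/I(2)=\bigoplus_{i=0}^\ell V_{\alpha_i}$ as the sum of simple affine root spaces of $\hg$, the unique linear relation $\alpha_0+\sum_{i=1}^\ell c_i\alpha_i=0$ in $X^*(T)\otimes\bQ$ (with marks $c_i$, $c_0=1$, arising from $\alpha_0=\delta-\theta$ and $\theta=\sum c_i\alpha_i$) singles out a unique up-to-scalar $T$-invariant monomial $Z=V_{\alpha_0}\prod_{i=1}^\ell V_{\alpha_i}^{c_i}$ of degree $1+\sum c_i=h=d_\ell$, so that $(\Sym V)^T=\Fun(V^*\dl T)\simeq \bC[Z]$. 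The main obstacle is to verify that the induced map $\bC[Z]\to \bC[z]$ is nonzero, hence an isomorphism: this amounts to matching, at the symbol level, the top-degree Segal--Sugawara element at $\infty$ with the fundamental $T$-invariant of $\Sym V$, which one extracts from \cite[Proposition 15]{Zhu} (or \cite[Proposition 28.(i)]{KXY}). Reassuringly, the Sym-degree $h$ of $Z$ matches the Hitchin degree $d_\ell=h$ of the unique surviving coefficient, so this compatibility is forced by graded degree considerations once nonvanishing is established. Combining the three steps produces the non-canonical isomorphism \eqref{eq:grFunOp*} and in particular exhibits $\gr\Fun\Op_*$ as a free $\Fun(V^*\dl T)$-module with basis $\bigotimes_i N_{\lambda_i}$.
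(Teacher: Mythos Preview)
Your Steps 2 and 3 are fine and match the end of the paper's argument (the Riemann--Roch computation of $\Hit_\cG(\bP^1)$ and the identification with $V^*\dl T$ via \cite[Lemma 19]{Zhu}). The gap is in Step 1: the assertion that the map $\Fun\Hit(D_{z_i}^\times)\to\bcZ_{\lambda_i}$ factors through $\bcZ_0=\Fun\Hit(D_{z_i})$ is false, and it is precisely this failure that makes the lemma nontrivial.

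Concretely, $\Spec\bcZ_{\lambda_i}$ sits inside $\Hit^\RS(D_{z_i})$, not inside $\Hit(D_{z_i})$. In coordinates, $\Fun\Hit^\RS(D_{z_i})\simeq\Fun\Hit(D_{z_i})\otimes_\bC M$ with $M$ the polynomial ring on the ``polar'' coefficients $h_{ij}$ for $0\le j\le d_i-1$, and the map $\Fun\Hit^\RS(D_{z_i})\to\bcZ_{\lambda_i}\simeq\bcZ_0\otimes_\bC N_{\lambda_i}$ is $\mathrm{id}\otimes(M\twoheadrightarrow N_{\lambda_i})$. Thus the polar generators $h_{ij}\in\Fun\Hit(D_{z_i}^\times)$ land in $1\otimes N_{\lambda_i}$, not in $\bcZ_0\otimes 1$. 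Since these same generators also act nontrivially on the global side (they impose the pole orders at $z_i$), you cannot simply peel off $N_{\lambda_i}$ as an inert tensor factor and reduce to the $\lambda_i=0$ case.

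What the paper does instead is first pass to $\Hit_\cG^\RS(\bP^1-\{z_i\})$ and then prove, by a separate Riemann--Roch count, that restriction to the polar part at $z_i$ gives an isomorphism from a complementary subspace $\Hit_\cG^\RS(\bP^1-\{z_i\})'$ onto $\Spec M$. This yields a splitting $\Hit_\cG^\RS(\bP^1-\{z_i\})\simeq\Hit_\cG(\bP^1)\oplus\Spec M$, but the restriction map to $\Hit^\RS(D_{z_i})=\Hit(D_{z_i})\times\Spec M$ is \emph{not} block-diagonal for this splitting (the complementary global forms have nonzero regular part at $z_i$). The paper then introduces a shearing automorphism $\psi$ of $\Hit^\RS(D_{z_i})$, $\Hit(D_{z_i})$-equivariant, that straightens the map into a genuine direct sum, after which the tensor with $\bcZ_{\lambda_i}$ over $\Fun\Hit^\RS(D_{z_i})$ does split as $\Fun\Hit_\cG(\bP^1)\otimes_\bC N_{\lambda_i}$. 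This two-step manoeuvre (the isomorphism $\iota$ and the automorphism $\psi$) is exactly the missing content in your Step 1; once you insert it, your argument becomes the paper's.
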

\begin{proof}
For simplicity of notations we assume there is only one $z_i$, i.e. $U=\bGm-\{z\}$, where at $z$ we attach a dominant integral weight $\lambda$. It will be clear from the proof that the generalization to multiple $z_i$ is completely formal.

From \eqref{eq:Op_*}, \eqref{eq:gr global op=Hit}, \eqref{eq:gr oper RS lambda}, \eqref{eq:gr op 1/h} and \eqref{eq:bZ_lambda=bZ_0 N_lambda}, we have
\begin{equation}
\begin{split}
\gr\Fun\Op_*\simeq&\Fun\Hit(\bGm-\{z\})\otimes_{\Fun\Hit(D_0^\times)}\Fun\Hit(D_0)_{\fI}\otimes_{\Fun\Hit(D_\infty^\times)}\Fun\Hit(D_\infty)_{\fI(2)}\\
&\otimes_{\Fun\Hit(D_z^\times)}\bcZ_\lambda.
\end{split}
\end{equation}

Following the notations in \cite[\S6]{Zhu}, we define
$$
\Hit_{\cG}^\RS(\bP^1-\{z\}):=\Hit(\bGm-\{z\})\times_{\Hit(D_z^\times)}\Hit^\RS(D_z)\times_{\Hit(D_0^\times)}\Hit(D_0)_{\fI}\times_{\Hit(D_\infty^\times)}\Hit(D_\infty)_{\fI(2)}.
$$
Note that $\Op_{\Lg}^\reg(D_z)_{\varpi(-\lambda-\rho)}\subset\Op_{\Lg}^\RS(D_z)$ implies $\Spec\bcZ_\lambda\subset\Hit^\RS(D_z)$. Thus we can write
$$
\gr\Fun\Op_*\simeq\Fun\Hit_{\cG}^\RS(\bP^1-\{z\})\otimes_{\Fun\Hit^\RS(D_z)}\bcZ_\lambda.
$$
We study the actions of $\Fun\Hit^\RS(D_z)$ on $\Fun\Hit_{\cG}^\RS(\bP^1-\{z\})$ and $\bcZ_\lambda$ respectively.

First, consider the action on $\bcZ_\lambda$. In terms of the generators $h_{ij}$ in \eqref{eq:h_ij}, we have
$$
\Fun\Hit^\RS(D_z)=\Fun\Hit(D_z)\otimes_{\bC}M,\quad M:=\bC[h_{ij}\ |\ 1\leq i\leq \ell, 0\leq j\leq d_i].
$$
Recall $\bcZ_0=\gr\fz\simeq\Fun\Hit(D_z)$. From the discussion in \cite[\S2.2]{FFR}, $N_\lambda\subset\bcZ_\lambda$ is mapped onto by $M$. Denote $I_\lambda:=\ker(M\ra N_\lambda)$, then from \eqref{eq:bZ_lambda=bZ_0 N_lambda} we have
\begin{equation}\label{eq:act on bcZ_lambda}
\bcZ_\lambda\simeq\Fun\Hit(D_z)\otimes M/I_\lambda\simeq\Fun\Hit^\RS(D_z)/\Fun\Hit(D_z)I_\lambda.
\end{equation}

Next, we study the action on $\Fun\Hit_{\cG}^\RS(\bP^1-\{z\})$. Consider
\begin{equation}\label{eq:Hit'}
\begin{split}
\Hit_{\cG}^\RS(\bP^1-\{z\})':&=\Hit_{\cG}^\RS(\bP^1-\{z\})\times_{\Hit(D_\infty^\times)}\Hit^\RS(D_\infty)\\
&=\bigoplus_{i=1}^\ell\Gamma(\bP^1,\omega_{\bP^1}^{d_i}((d_i-1)\cdot 0+d_i\cdot z+d_i\cdot\infty)).
\end{split}
\end{equation}
Note from \eqref{eq:Hit_I(2)} that $\Hit^\RS(D_z)\subset\Hit(D_z)_{\fI}$, thus $\Hit_{\cG}^\RS(\bP^1-\{z\})'\subset\Hit_{\cG}^\RS(\bP^1-\{z\})$.

Claim: the following composition is an isomorphism:
\begin{equation}\label{eq:iota}
\iota:\Hit_{\cG}^\RS(\bP^1-\{z\})'\hookrightarrow\Hit_{\cG}^\RS(\bP^1-\{z\})\xrightarrow{\mathrm{res}|_{D_z^\times}}\Hit^\RS(D_z)\twoheadrightarrow\Hit^\RS(D_z)/\Hit(D_z)\simeq\Spec M.
\end{equation}
In fact, observe that $\omega_{\bP^1}^{d_i}((d_i-1)\cdot 0+k\cdot z+d_i\cdot\infty)\simeq\cO_{\bP^1}(k-1)$. Then it is clear from \eqref{eq:Hit'} that for any $1\leq i\leq \ell$ and $1\leq k\leq d_i$, the meromorphism $d_i$-form in $\Hit_{\cG}^\RS(\bP^1-\{z\})'$ that has a pole of order $k$ at $z$ is unique up to a nonzero scalar and such forms with a pole of order smaller than $k$ at $z$. These forms give a basis of $\Hit_{\cG}^\RS(\bP^1-\{z\})'$, which maps to a basis of $\Spec M$. This gives the claimed isomorphism.

The kernel of the above map $\Hit_{\cG}^\RS(\bP^1-\{z\})\ra\Spec M$ is
$$
\Hit_{\cG}(\bP^1):=\Hit_{\cG}^\RS(\bP^1-\{z\})\times_{\Hit(D_z^\times)}\Hit(D_z)=\Hit(\bGm)\times_{\Hit(D_0^\times)}\Hit(D_0)_{\fI}\times_{\Hit(D_\infty^\times)}\Hit(D_\infty)_{\fI(2)}.
$$

Thus we have an isomorphism
$$
\Hit_{\cG}^\RS(\bP^1-\{z\})=\Hit_{\cG}(\bP^1)\oplus\Hit_{\cG}^\RS(\bP^1-\{z\})'.
$$
Note that the embedding $\Hit_{\cG}^\RS(\bP^1-\{z\})\ra\Hit^\RS(D_z)\simeq\Hit(D_z)\times\Spec M$ is not a direct sum of maps to $\Hit(D_z)$ and $\Spec M$. This is because $\Spec M\subset\Hit^\RS(D_z)$ is the space of differential forms with zero unramified part, but the Laurent expansions at $z$ of nonzero forms in $\Hit_{\cG}^\RS(\bP^1-\{z\})'$ would have nonzero regular part, thus are not mapped into $\Spec M\subset\Hit^\RS(D_z)$. We can fix this using the isomorphism \eqref{eq:iota}. Consider morphism
\begin{align*}
\psi:\Hit^\RS(D_z)=&\Hit(D_z)\times\Spec M
\xrightarrow{\mathrm{id}\times\iota^{-1}}\Hit(D_z)\times\Hit_{\cG}^\RS(\bP^1-\{z\})'\\
&\xrightarrow{\mathrm{id}\times\mathrm{res}|_{D_z^\times}}\Hit(D_z)\times\Hit^\RS(D_z)
\xrightarrow{\mathrm{addition}}\Hit^\RS(D_z).
\end{align*}
which is a $\Hit(D_z)$-equivariant automorphism. Then $\Hit_{\cG}^\RS(\bP^1-\{z\})\ra\Hit^\RS(D_z)\xrightarrow{\psi^{-1}}\Hit^\RS(D_z)$ is the direct sum of $\Hit_\cG(\bP^1)\ra\Hit(D_z)$ and $\mathrm{id}_M$.

Putting the above discussions together, we get:
$$
\gr\Fun\Op_*\simeq(\Fun\Hit_\cG(\bP^1)\otimes M)\otimes_{\Fun\Hit(D_z)\otimes M}\Fun\Hit(D_z)\otimes M/I_\lambda\simeq\Fun\Hit_\cG(\bP^1)\otimes_{\bC}N_\lambda.
$$

The lemma follows immediately from the above and the following isomorphism proved in \cite[Lemma 19]{Zhu}:
$$
\Hit_\cG(\bP^1)\simeq\Hit(D_\infty)_{\fI(2)}/\Hit^\RS(D_\infty)\simeq V^*/\!\!/T.
$$ 
\end{proof}

\subsection{Proof of Proposition \ref{p:flatness}}
It suffices to show that $\gr\cD_{z_i,\lambda_i}'\simeq\otimes_i\cV_{\lambda_i}'$ is flat over $\gr A=\gr\Fun\Op_*\otimes_{\Fun V^*/\!\!/T}\Fun V^*$. Moreover, it suffices to prove this over neighborhoods $\cY$ in $T^*\Bun_\cG$ over which the vector bundles $\cV_{\lambda_i}'$ can be trivialized. From Lemma \ref{l:grFunOp*}, it suffices to show that
$\cO_{\cY}\otimes_{\bC,i}V_{\lambda_i}$ is flat over
$$
\gr\Fun\Op_*\otimes_{\Fun V^*/\!\!/T}\Fun V^*\simeq\Fun V^*\otimes_{\bC,i}N_{\lambda_i}.
$$

We know from \cite[Lemma 18]{Zhu} that the moment map $T^*\Bun_\cG\ra V^*$ is flat, so $\cO_{\cY}$ is flat over $\Fun V^*$. Also, recall that $V_{\lambda_i}$ is a rank one free $N_{\lambda_i}$-module. The desired flatness follows.

\section{Proof of Theorem \ref{t:main}}
In this section we prove Theorem \ref{t:main} following the outline described in \S\ref{ss:outline}. Let $\nabla$ be a $\LG$-connection on $\bGm$ as in the theorem.

\subsection{Generic oper structure}
In order to apply \cite[Corollary 1.1]{Arinkin}, we need to show that $\nabla$ is generically irreducible. This follows from the lemma befow:
\begin{lem}\label{l:FG irr at infty}
$\nabla_{\FG,\lambda}|_{D_\infty^\times}$ is irreducible.
\end{lem}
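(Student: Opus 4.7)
The plan is to show that the differential Galois group $H$ of $\nabla_{\FG,\lambda}|_{D_\infty^\times}$, regarded as a subgroup of $\check{G}$ (well-defined up to conjugation), is not contained in any proper parabolic subgroup of $\check{G}$. This is equivalent to the irreducibility of the $\check{G}$-connection $\nabla_{\FG,\lambda}|_{D_\infty^\times}$, since a reduction to a parabolic subgroup at the formal level corresponds precisely to containment of $H$ in (a conjugate of) that parabolic.

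First I would recall the explicit analysis of the formal structure of $\nabla_{\FG,\lambda}|_{D_\infty^\times}$ carried out in \cite{FGr} (and refined in \cite{KSCoxeter}, \cite{KSformal}). In the coordinate $s = t^{-1}$ around $\infty$, the leading term involves $p_{-1}+\lambda s^{-1}E_\theta$, which is a regular element in the loop algebra of $\Lg$ in the sense of Kac, associated to the affine Coxeter element. After pulling back along the $h$-fold ramified cover $s=u^h$ and performing a suitable gauge transformation by a cocharacter of $\cT$, one reaches a canonical form whose formal monodromy on the cover is a Coxeter element $c\in\check{G}$. In particular, up to conjugation, $H$ contains the cyclic subgroup generated by $c$.

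Next I would invoke the classical property of Coxeter elements: $c$ is \emph{regular elliptic}, i.e.\ regular semisimple with Coxeter torus as centralizer, and no conjugate of $c$ lies in any proper Levi subgroup of $\check{G}$ (see Springer's theory of regular elements, or Kostant). With this in hand, suppose for contradiction that $H\subset\check{P}$ for some proper parabolic $\check{P}\subsetneq\check{G}$ with Levi quotient $\check{L}$. The element $c\in H\subset\check{P}$ is semisimple, so by standard structure theory of parabolics it is $\check{P}$-conjugate to an element of $\check{L}$. This would place a Coxeter element inside a proper Levi subgroup of $\check{G}$, contradicting ellipticity. Hence no such $\check{P}$ exists, and $\nabla_{\FG,\lambda}|_{D_\infty^\times}$ is irreducible.

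The main obstacle is the first step, namely the precise identification of the formal monodromy on the $h$-fold cover with a Coxeter element. While this requires a careful tracking of the gauge transformation and the resulting canonical form, the computation is essentially done in \cite{FGr} (as part of their determination of the global monodromy group) and in \cite{KSformal}; we only use its local counterpart at $\infty$. An alternative, more self-contained route would be to apply the Kamgarpour--Sage classification of formal $\check{G}$-connections of slope $1/h$ (\cite[Theorem~5]{KSformal}) to reduce directly to the Coxeter case, bypassing the explicit gauge computation.
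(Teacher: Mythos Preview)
Your overall strategy---showing that the differential Galois group $H$ at $\infty$ is not contained in any proper parabolic---matches the paper's. However, there is a genuine gap in your execution.

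You claim that the formal monodromy $c\in\check G$ is ``regular elliptic'' with the Coxeter torus as centralizer, and that no conjugate of $c$ lies in a proper Levi subgroup. This conflates two different objects. Ellipticity of Coxeter elements is a statement about the \emph{Weyl group}: a Coxeter element $w\in W$ is not $W$-conjugate into any proper parabolic subgroup $W_L\subsetneq W$. It is not a statement about lifts of $w$ to $\check G$. Over $\bC$, every semisimple element of $\check G$ lies in a maximal torus and hence in many proper Levi subgroups; indeed, by Kostant any lift of a Coxeter element to $N_{\check G}(T')$ is $\check G$-conjugate to an element of the standard torus $\cT$. Concretely, the element produced by the Frenkel--Gross analysis is $n=(2\crho)(e^{\pi i/h})\in\cT$, which already sits inside every standard Levi. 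Its centralizer is $\cT$, not the Coxeter torus $T'$ (these are different maximal tori), and from $c\in\check P$ alone you cannot derive a contradiction.

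What the paper uses, and what your argument omits, is the \emph{torus part} of $H$. From \cite[\S13]{FGr}, $H$ is generated by the smallest torus $S$ whose Lie algebra contains the regular semisimple element $N+E$, together with the element $n$. If $H\subset P$ then $S\subset P$; since $N+E$ is regular, the centralizer $T'=C_{\check G}(S)$ is a maximal torus, which is then forced to lie in $P$ as well. After conjugating $T'$ into the Levi $L_P$, one has $n\in N_P(T')$, so the image of $n$ in $W(T')$---which \emph{is} the Coxeter element---lands in the parabolic subgroup $W_{L_P}$ of $W$. Only now does Weyl-group ellipticity apply (the paper phrases this as $n$ centralizing the center $S'\subset T'$ of $L_P$, forcing $S'=1$), yielding $L_P=\check G$. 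In short, you need both generators of $H$: the torus $S$ pins down \emph{which} maximal torus the ``Coxeter'' statement refers to, and only then does ellipticity in $W$ bite.
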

\begin{proof}
Denote the local monodromy group of $\nabla_{\FG,\lambda}|_{D_\infty^\times}$ by $G_{\nabla,\infty}$. We need to show $G_{\nabla,\infty}$ is not contained in any proper parabolic subgroup. From \cite[\S13]{FGr}, we know $G_{\nabla,\infty}$ is generated by the smallest torus $S$ whose Lie algebra contains $N+E\in\Lg$, together with element $n=(2\crho)(e^{\pi i/h})$ which normalizes $S$. Since $N+E$ is regular semisimple, there is a unique maximal torus $T'$ containing $S$, and $T'=C_{\LG}(S)$. Thus $n\in N_{\LG}(T')/T'$ is a Coxeter element in the Weyl group of $T'$.

Suppose $G_{\nabla,\infty}$ is contained in a parabolic subgroup $P=L_PU_P$ with unipotent radical $U_P$ and Levi subgroup $L_P$. Thus $S\subset T'\subset P$. By conjugation by $P$, we may assume $T'\subset L_P$. Thus $n\in N_{\LG}(T')\cap P$ is contained in the Weyl group of $L_P$, $n\in L_P$. Since $L_P$ is a Levi subgroup, there is a subtorus $S'\subset L_P$ such that $L_P=C_{\LG}^\circ(S')$. By conjugation, we may choose $S'\subset T'$. Thus $n\in L_P$ centralizes $S'$. However, since $n\in N_{\LG}(T')/T'$ is a Coxeter element, we have $\Lie(S')\subset(\Lie(T'))^n=0$. Thus $S'=1$, $L_P=\LG$, and the proof is complete.
\end{proof}

The above lemma implies that $\nabla|_{D_\infty^\times}\simeq\nabla_{\FG,\lambda}|_{D_\infty^\times}$ is irreducible, thus $\nabla$ is generically irreducible. By \cite[Corollary 1.1]{Arinkin}, we can equip an oper structure to $\nabla$ over some open subcurve $U\subset\bGm$. Denote $\bGm-U=\{z_1,z_2,...,z_k\}$, $S=\{0,z_1,...,z_k,\infty\}$. We obtain an oper
\begin{equation}\label{eq:generic op str}
\chi\in\Op_{\Lg}(U)
\end{equation}
whose underlying connection is $\nabla|_U$.

\subsection{Local oper structures}\label{ss:local opers}
For each $z\in S$, the restriction of $\chi$ to $D_z^\times$ defines a local oper $\chi|_{D_z^\times}\in\Op_{\Lg}(D_z^\times)$. 

\subsubsection{}
At $z=0$, the underlying connection of $\chi_0:=\chi|_{D_0^\times}\in\Op_{\Lg}(D_0^\times)$ is $\nabla|_{D_0^\times}$, which is assumed to have regular singularity with unipotent monodromy. By \cite[Lemma 10.4.2]{FrenkelBook}, there exists an integral weight $\lambda_0$ such that $\chi_0$ is contained in the subspace
$$
\chi_0\in\Op_{\Lg}^\RS(D_0)_{\varpi(-\lambda_0-\rho)}.
$$
We would also use $\chi_0$ to denote the corresponding character $\chi_0:\Fun\Op_{\Lg}^\RS(D_0)_{\varpi(-\lambda_0-\rho)}\ra\bC$.

\subsubsection{}
At each $z_i$, $i=1,...,k$, the underlying connection of $\chi_i:=\chi|_{D_{z_i}^\times}\in\Op_{\Lg}(D_{z_i}^\times)$ is $\nabla|_{D_{z_i}^\times}$, which has trivial monodromy. By \cite[Lemma 1]{FGWeyl}, there exists a unique dominant integral weight $\lambda_i$ such that $\chi_i$ is contained in the subspace
$$
\chi_i\in\Op_{\Lg}^\reg(D_{z_i})_{\varpi(-\lambda_i-\rho)}.
$$
We would also use $\chi_i$ to denote the corresponding character $\chi_i:\Fun\Op_{\Lg}^\reg(D_{z_i})_{\varpi(-\lambda_i-\rho)}\ra\bC$.

\subsubsection{}
At $z=\infty$, the underlying connection of $\chi_\infty:=\chi|_{D_\infty^\times}\in\Op_{\Lg}(D_\infty^\times)$ is $\nabla|_{D_\infty^\times}$, which is assumed to have slope $\frac{1}{h}$. By definition,
$$
\chi_\infty\in\Op_{\Lg}(D_\infty)_{\leq 1/h}.
$$
We would also use $\chi_\infty$ to denote the corresponding character $\chi_\infty:\Fun\Op_{\Lg}(D_\infty)_{\leq 1/h}\ra\bC$.

Moreover, recall $U(V)^T= U(V)\cap\fZ_{\fI(2)}\subset\End(\Vac_{\fI(2)})$ from \cite[Proposition 15]{Zhu}. Since $U(V)^T$ is just a polynomial algebra of one variable, we can lift the composition character
$$
\phi:U(V)^T\ra\fZ_{\fI(2)}\simeq\Fun\Op_{\Lg}(D_\infty)_{\leq 1/h}\xrightarrow{\chi_\infty}\bC
$$
to a character $U(V)\ra\bC$, which we still denote by $\phi$. We would also use $\phi$ to denote the corresponding additive character $\phi:V\ra\bC$. 
\begin{lem}\label{l:phi generic}
The character $\phi:V\ra\bC$ is generic, i.e. nonzero on each root subspace in $V=I(1)/I(2)$.
\end{lem}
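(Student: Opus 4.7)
My plan is to reduce the lemma to a single non-vanishing statement about the restriction $\chi_\infty|_{U(V)^T}$, which is in turn controlled by the slope hypothesis.

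First I would compute $U(V)^T$ explicitly. The vector space $V = I(1)/I(2)$ decomposes as $V = \bigoplus_{i=0}^\ell V_{\alpha_i}$ into one-dimensional root spaces for the simple affine roots $\alpha_0 = -\theta+\delta, \alpha_1, \ldots, \alpha_\ell$, and $T$ acts on $V_{\alpha_i}$ by the image $\bar\alpha_i \in \ft^*$, where $\bar\alpha_0 = -\theta$ and $\bar\alpha_i = \alpha_i$ for $1 \leq i \leq \ell$. Writing $\theta = \sum_{i=1}^\ell a_i \alpha_i$ in terms of the marks $a_i$, the only $T$-weight-zero monomials in $U(V) = \Sym V$ are the powers of
\begin{equation*}
x := e_{\alpha_0} \prod_{i=1}^\ell e_{\alpha_i}^{a_i},
\end{equation*}
so $U(V)^T = \bC[x]$ is polynomial in one generator, consistent with the remark made after \cite[Proposition 15]{Zhu} and recalled just above the lemma.

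Next I would establish $\chi_\infty(x) \neq 0$. Writing the oper canonical form of $\chi_\infty$ in the uniformizer $s = t^{-1}$ at infinity as $\td + (p_{-1} + \sum_i v_i(s)p_i)\td s$, the subspace $\Op_{\Lg}(D_\infty)_{\leq 1/h} \subset \Op_{\Lg}(D_\infty^\times)$ is cut out by the pole-order bounds reflected in \eqref{eq:Hit_I(2)}--\eqref{eq:gr op 1/h}, and an oper lies in a strictly smaller slope stratum precisely when the coefficient of $s^{-d_\ell-1}$ in $v_\ell(s)$ vanishes. Under the Feigin-Frenkel isomorphism $\fZ_{\fI(2)} \simeq \Fun\Op_{\Lg}(D_\infty)_{\leq 1/h}$, the generator $x \in U(V)^T$ corresponds (up to a nonzero scalar) to the function extracting precisely this top leading coefficient; this matching can be extracted from the proof of \cite[Proposition 15]{Zhu}, or alternatively from the Segal-Sugawara calculation of \cite[Proposition 28.(i)]{KXY}. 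Since by hypothesis $\nabla|_{D_\infty^\times}$ has slope exactly $1/h$, not smaller, this leading coefficient is nonzero, so $\chi_\infty(x) \neq 0$.

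Finally, any lift $\phi: U(V) \to \bC$ of $\chi_\infty|_{U(V)^T}$ is determined by values $\phi(e_{\alpha_i}) = c_i \in \bC$ subject to the single constraint $c_0 \prod_{i=1}^\ell c_i^{a_i} = \chi_\infty(x)$. Since $\chi_\infty(x) \neq 0$, one may (and does) choose all $c_i \in \bC^\times$; the resulting additive character $\phi: V \to \bC$ is then nonzero on every simple affine root subspace, as claimed. The main obstacle is step two: pinning down that the generator $x$ of $U(V)^T$, viewed inside $\fZ_{\fI(2)}$, really corresponds to the slope-detecting leading coefficient on the oper side. All other steps amount to formal linear algebra on root-weight monomials together with the hypothesis of the lemma.
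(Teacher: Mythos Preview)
Your proposal is correct and follows essentially the same approach as the paper: both reduce genericity to the nonvanishing of $\phi$ on the single generator of $U(V)^T$, identify that generator (up to scalar) with the image of the Segal--Sugawara element $S_{\ell,h}$ via \cite[Proposition 28.(i)]{KXY}, and then invoke the slope-$1/h$ hypothesis to force the corresponding leading oper coefficient $v_{\ell,h}$ to be nonzero. Your write-up is somewhat more explicit about the monomial description $x=e_{\alpha_0}\prod_i e_{\alpha_i}^{a_i}$ and about why $\chi_\infty(x)\neq0$ forces every $c_i\neq0$ for \emph{any} lift (not merely a well-chosen one), but the key technical input---matching the generator of $U(V)^T$ with the slope-detecting coefficient---is the same step the paper isolates and sources to \cite{KXY} and \cite{Zhu}.
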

\begin{proof}
Since $U(V)^T$ is generated by a product of dual basis of root vectors in $V$ with some multiplicities, it suffices to show $\phi:U(V)^T\ra\bC$ is nonzero.

Write the oper canonical form \eqref{eq:oper canonical form} of $\chi_\infty$ as
$$
\chi_\infty=\td+(p_{-1}+\sum_{i=1}^\ell v_i(s)p_i)\td s,\quad v_i(s)\in\bC(\!(s)\!),
$$
where $s=t^{-1}$ is a coordinate on $D_\infty^\times$, $v_i(s)=\sum_j v_{ij}s^{-j-1}$. From the slope formula in \cite[Definition 1]{CK}\footnote{Note that we use $d_i$ to denote fundamental degrees, while there $d_i$ are exponents.}, we have
$$
\sup\{0,\sup\{-\frac{\mathrm{ord}_s(v_i)}{d_i}-1\}_{i=1,...,\ell}\}=\frac{1}{h}.
$$
Thus $\mathrm{ord}_s(v_\ell)=-h-1$, $v_{\ell,h}\neq0$, i.e. $\chi_\infty(v_{\ell,h})\neq0$ for $v_{\ell,h}\in\Fun\Op_{\Lg}(D_\infty)_{\leq 1/h}$. 

On the other hand, $v_{ij}$ maps to Segal-Sugawara operator $S_{ij}\in\fZ$ under Feigin-Frenkel isomorphism. Denote $\pi:\fZ\twoheadrightarrow\fZ_{\fI(2)}\simeq\Fun\Op_{\Lg}(D_\infty)_{\leq 1/h}$, then $\chi_\infty(\pi(S_{\ell,h}))\neq 0$. Let $d=h$ in \cite[Proposition 28.(i)]{KXY}, we get $\pi(S_{\ell,h})\in U(V)^T$\footnote{This result is originally due to an unpublished work of Tsao-Hsien Chen.}\footnote{Although in \cite{KXY} they use a potentially different set of generators $S_{ij}$ of the center $\fZ$ and assumes $G$ is a classical group, their argument in the case of $d=h$ works for any set of generators $S_{ij}$ and any simple reductive group.}.Thus $\phi(\pi(S_{\ell,h}))=\chi_\infty(\pi(S_{\ell,h}))\neq0$, which proves the lemma.
\end{proof}

\subsection{The automorphic sheaf}
Let $\Loc$ be the multiple points localization functor for $S=\{0,z_1,...,z_k,\infty\}$ as defined in \S\ref{s:flatness}. Consider the following D-module on $\Bun_\cG$:
\begin{equation}\label{eq:cA Loc}
\cA:=\omega_{\Bun_\cG}^{-1/2}\otimes\Loc(\bM_{w_0\lambda_0}^\opp/\ker\chi_0\otimes_{\bC,i}\bV_{\lambda_i}/\ker\chi_i\otimes_{\bC}\Vac_{\fI(2)}/\ker\phi),
\end{equation}

where $\ker\chi_i\subset\fZ$ and $\ker\phi\subset U(V)$ act on the corresponding $\hg$-modules. 

On the other hand, we can see from the discussion in \S\ref{ss:local opers} that
\begin{align*}
\chi\in\Op_*:=&\Op_{\Lg}(U)\times_{\Op_{\Lg}(D_0^\times)}\Op_{\Lg}^\RS(D_0)_{\varpi(-\lambda_0-\rho)}\times_{i,\Op_{\Lg}(D_{z_i}^\times)}\Op_{\Lg}^\reg(D_{z_i})_{\varpi(-\lambda_i-\rho)}\\
&\times_{\Op_{\Lg}(D_\infty^\times)}\Op_{\Lg}(D_\infty)_{\leq 1/h}.
\end{align*}
where the only difference of the above from \eqref{eq:Op_*} is that here $w_0\lambda_0$ is replaced by $w_0(w_0\lambda_0)=\lambda_0$. Since $\chi_0,\chi_i,\chi_\infty$ in \eqref{eq:cA Loc} are all restrictions of $\chi$, we obtain from \eqref{eq:global loc} that
\begin{equation}\label{eq:cA global}
\cA=\omega_{\Bun_\cG}^{-1/2}\otimes((\cD_{\Bun_\cG}'\otimes\cL_{w_0\lambda_0}\otimes_i\cV_{\lambda_i})\otimes_{\Fun\Op_*\otimes_{U(V)^T}U(V)}\bC_{\chi,\phi})
\end{equation}

where $\bC_{\chi,\phi}$ is the character of $\Fun\Op_*\otimes_{U(V)^T}U(V)$ defined from $\chi,\phi$.

\begin{prop}\label{p:cA properties}
$\cA$ is a D-module on $\Bun_{\cG}$ that is
\begin{enumerate}
	\item [(i)] nonzero,
	\item [(ii)] holonomic,
	\item [(iii)] $(V,\cL_\phi)$-equivariant, where $\cL_\phi$ is the pullback of the exponential D-module via $\phi:V\ra\bGa$,
	\item [(iv)] a Hecke eigensheaf with eigenvalue $\nabla|_U$,
	\item [(v)] irreducible.
\end{enumerate} 
\end{prop}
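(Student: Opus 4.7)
The plan is to establish the five properties by combining Proposition~\ref{p:flatness} with the Heinloth--Ng\^o--Yun classification \eqref{eq:HNY main} and the Beilinson--Drinfeld formalism \cite{BD} for Hecke eigensheaves attached to opers. From \eqref{eq:cA global}, $\cA$ is (up to the twist by $\omega_{\Bun_\cG}^{-1/2}$) the base change of $\cD'_{z_i,\lambda_i}$ along the character $\bC_{\chi,\phi}$ of $A=\Fun\Op_*\otimes_{U(V)^T}U(V)$, so Proposition~\ref{p:flatness} will be the workhorse throughout.

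For (i) and (ii) I would work on a Zariski open $\cY\subset\Bun_\cG$ trivializing $\cL_{w_0\lambda_0}$ and each $\cV_{\lambda_i}$; there $\cD'_{z_i,\lambda_i}|_\cY \simeq \cD'_\cY\otimes_\bC(\bC_{w_0\lambda_0}\otimes\bigotimes_i V_{\lambda_i})$. For (i), combine flatness over $A$ with Lemma~\ref{l:grFunOp*}, whose proof identifies $\gr A\simeq\Fun(V^*\dl T)\otimes\bigotimes_i N_{\lambda_i}$ as a ``polynomial-type'' ring and $\bC_{\chi,\phi}$ as the residue field of a maximal ideal; base change of a nonzero flat module along such a residue field is nonzero. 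For (ii), compute the characteristic variety of $\cA$ as the vanishing locus of the symbols of $\ker\chi$ and $\ker\phi$. Via the identifications in \S\ref{ss:gr local opers}, this variety sits inside the closed subscheme of $T^*\Bun_\cG$ cut out by fixing the Hitchin data (via $\chi$) and the image under the moment map to $V^*$ (via $\phi$); Lagrangian-ness then follows from the Hamiltonian reduction $T^*\Bun_\cG\simeq\mu^{-1}(0)/G$ together with the flatness of the moment map to $V^*$ noted in \cite[Lemma 18]{Zhu}.

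Property (iii) is built into the construction: the subgroup $V\subset I(1)/I(2)$ acts on $\Vac_{\fI(2)}/\ker\phi$ by the character $\phi$, and the functoriality of the multi-point localization translates this into twisted equivariance along the action map $m:V\times\Bun_\cG\to\Bun_\cG$. For (iv), I would adapt \cite{Zhu}, which extends Beilinson--Drinfeld: at each $x\in U$ the Hecke functors commute with the multi-point localization and, via the Feigin--Frenkel isomorphism \eqref{eq:FF}, convert the central action of $\fZ_x$ on the Weyl/Verma/vacuum module into a tensor product with the fiber of the $\LG$-local system determined by $\chi|_{D_x^\times}$. Since $\chi$ is globally defined on $U$ with underlying connection $\nabla|_U$, this yields the Hecke eigenproperty with eigenvalue $\nabla|_U$.

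Finally, for (v) I would combine (i)--(iii) with \eqref{eq:HNY main}: any nonzero holonomic $(V,\cL_\phi)$-equivariant D-module is a direct sum of copies of the unique simple such object, the clean extension of $\cL_\phi$. To pin down that $\cA$ consists of a single copy, I plan to exhibit $\cA$ as cyclic over $\cD'_{\Bun_\cG}$, generated by the image under $\Loc$ of the tensor of highest-weight vectors in $\bM_{w_0\lambda_0}^\opp\otimes\bigotimes_i\bV_{\lambda_i}\otimes\Vac_{\fI(2)}$; cyclicity is preserved by central-character quotients and by the localization functor, while a direct sum of more than one copy of the same nonzero simple $\cD$-module is never cyclic. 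I expect the main technical obstacle to be (iv): while the Beilinson--Drinfeld argument is standard at smooth points of $U$, one must carefully track the compatibility of the Hecke action with the parahoric and pro-unipotent structures of $\cG$ at $0$ and $\infty$ and with the integrability of the chosen Harish-Chandra modules at each singularity in $\{0,z_i,\infty\}$.
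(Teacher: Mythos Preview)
Your approach to parts (i), (iii), (iv) is essentially the paper's. For (ii) you propose a direct characteristic-variety computation; the paper instead observes that locally $\cA|_\cY$ is a quotient of finitely many copies of $\omega_{\Bun_\cG}^{-1/2}\otimes(\cD_{\Bun_\cG}'\otimes_{U(V)}\bC_\phi)$, which is already known to be holonomic by \cite[Lemma~18]{Zhu}. Your route would likely succeed, but the paper's is shorter because it only quotients by $\ker\phi$ and never needs to control the symbols of $\ker\chi$.

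The genuine gap is in (v). Your claim that ``a direct sum of more than one copy of the same nonzero simple $\cD$-module is never cyclic'' is false. Over the open substack $V\hookrightarrow\Bun_\cG$, the simple object restricts to the exponential $\cD$-module $\cL_\phi$, and already on $\bA^1$ the module $\cL_\phi\oplus\cL_\phi$ is cyclic: identifying $\cL_\phi$ with $\cO$ equipped with $\partial\mapsto\partial+\phi$, the vector $(1,x)$ generates, since $(\partial-\phi)(1,x)=(0,1)$. More generally, for noncommutative rings such as rings of differential operators, cyclicity does not bound multiplicity of a simple summand. So even if the image of the tensor of highest-weight vectors generates $\cA$, this does not force the multiplicity to be one.

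The paper pins down the multiplicity differently: it restricts to the open $V\subset\Bun_\cG$, passes to associated graded, and uses Lemma~\ref{l:grFunOp*} together with the fact (from \cite{FFR}) that $V_{\lambda_i}$ is a \emph{free rank-one} module over $N_{\lambda_i}$. This yields $\gr(\cA|_V)\simeq p_*\cO_V$, which matches the associated graded of a single copy of the irreducible object, whence $\cA$ is irreducible. If you want to repair your argument, you should replace the cyclicity step by this rank computation; the key input you are missing is precisely the rank-one freeness of $V_{\lambda_i}$ over $N_{\lambda_i}$.
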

\begin{proof}
(i): This is immediate from Proposition \ref{p:flatness}.

(ii): It suffices to prove the holonomicity over local neighborhoods $\cY$ in $\Bun_{\cG}$ where the bundles $\cL_{w_0\lambda_0}$, $\cV_{\lambda_i}$ can be trivialized. From \eqref{eq:cA global}, we can see $\cA|_{\cY}$ is a quotient of a direct sum of 
$$
\omega_{\cY}^{-1/2}\otimes(\cD_{\cY}'\otimes_{U(V)}\bC_\phi)=(\omega_{\Bun_{\cG}}^{-1/2}\otimes(\cD_{\Bun_{\cG}}'\otimes_{U(V)}\bC_\phi))|_{\cY}.
$$
From \cite[Lemma 18]{Zhu}, we know $\omega_{\Bun_{\cG}}^{-1/2}\otimes(\cD_{\Bun_{\cG}}'\otimes_{U(V)}\bC_\phi)$ is holonomic. Thus $\cA|_{\cY}$ is holonomic.

(iii): The action of $V$ on $\Bun_{\cG}$ induces $U(V)\ra\End(\cD_{\Bun_{\cG}}')\ra\End(\cD_{z_i,\lambda_i}')$, thus being $(V,\cL_\phi)$-equivariant is clear from construction.

(iv): Provided the flatness in Proposition \ref{p:flatness}, the proof is the same as the proof of \cite[Corollary 9]{Zhu}. Note that we only obtain an eigenvalue over $U=\bGm-\{z_1,...,z_k\}$, because at $z_i$'s we apply the localization functor to quotients of Weyl modules rather than quotients of vacuum module.

(v): From (i)-(iii), we know $\cA$ is a nonzero holonomic $(V,\cL_\phi)$-equivariant D-module on $\Bun_{\cG}$, i.e. $\cA\in D_h\!-\!\mathrm{mod}(\Bun_\cG)^{(V,\cL_\phi)}$. From the discussion in \S\ref{sss:review on FG}and \eqref{eq:HNY main}, this implies that $\cA$ is a direct sum of the unique irreducible object in $D_h\!-\!\mathrm{mod}(\Bun_\cG)^{(V,\cL_\phi)}$. From the discussion on the last page of \cite{Zhu} under Lemma 21, we know this irreducible object is $\omega_{\Bun_{\cG}}^{-1/2}\otimes(\cD_{\Bun_{\cG}}'\otimes_{U(V)}\bC_\phi)$. We need to show $\cA$ is one copy of this D-module. It suffices to show this for the associated graded of the restriction of $\cA$ to the open substack $V\hookrightarrow\Bun_{\cG}$. Denote $p:T^*V\ra V$. For the irreducible object, note that $\omega_{\Bun_{\cG}}^{-1/2}$ is trivializable over affine space $V$. We have
$$
\gr(\omega_{\Bun_{\cG}}^{-1/2}\otimes(\cD_{\Bun_{\cG}}'\otimes_{U(V)}\bC_\phi))|_V
\simeq \gr(\cD_V\otimes_{U(V)}\bC_\phi)
\simeq p_*(\cO_{T^*V}\otimes_{S(V)}\bC_{\bar{\phi}})
\simeq p_*(\cO_V),
$$
where $\bar{\phi}$ is the associated graded of $\phi$, $V$ is the zero section $V\hookrightarrow V\times\Lie(V)^*\simeq T^*V$.

For $\cA$, note that $\cL_{w_0\lambda_0},\bV_{\lambda_i}$ can be trivailized over $V$. By Lemma \ref{l:grFunOp*}, we have
\begin{align*}
\gr(\cA|_V)
&\simeq \gr(\cD_V\otimes_{\bC,i}V_{\lambda_i})\otimes_{\Fun\Op_*\otimes_{U(V)^T}U(V)}\bC_{\chi,\phi}\\
&\simeq
p_*((\cO_{T^*V}\otimes_{\bC,i}V_{\lambda_i})\otimes_{U(V)\otimes_{\bC,i}N_{\lambda_i}}\bC_{\bar{\chi},\bar{\phi}})\\
&\simeq
p_*(\cO_V\otimes_{\bC,i}(V_{\lambda_i}\otimes_{N_\lambda}\bC_{\bar{\chi_i}})).
\end{align*}
Since $V_{\lambda_i}$ is a rank one free module over $N_{\lambda_i}$, we get $\gr(\cA|_V)\simeq p_*(\cO_V)$. This implies
\begin{equation}\label{eq:cA isom}
\cA\simeq\omega_{\Bun_{\cG}}^{-1/2}\otimes(\cD_{\Bun_{\cG}}'\otimes_{U(V)}\bC_\phi),
\end{equation}
which completes the proof.
\end{proof}

\subsection{Identification of eigenvalues}
From Proposition \ref{p:cA properties}, we know $\nabla|_U$ is eigenvalue of irreducible Hecke eigensheaf $\cA$. From \eqref{eq:cA isom} and the last page of \cite{Zhu}, we know the eigenvalue of $\cA$ is also the following Frenkel-Gross connection:
$$
\nabla_\phi:=\td+(N+\phi(\pi(S_{\ell,h}))tE)\frac{\td t}{t},
$$
where $\pi(S_{\ell,h})$ is a generator of $U(V)^T$ defined in the proof of Lemma \ref{l:phi generic}, and $\phi(\pi(S_{\ell,h}))\neq 0$ from that lemma. Thus we obtain an isomorphism $\nabla|_U\simeq\nabla_\phi|_U$. Since $\nabla$ and $\nabla_\phi$ are both regular on $\bGm$, this isomorphism extends to an isomorphism $\nabla\simeq\nabla_\phi$ over $\bGm$, which completes the proof of Theorem \ref{t:main}.

\begin{bibdiv}
\begin{biblist}

\bib{Arinkin}{article}
{
	title={Irreducible connections admit generic oper structures}, 
	author={Arinkin, D.},
	year={2016},
	eprint={1602.08989},
	archivePrefix={arXiv},
	primaryClass={math.AG}
}

\bib{BD}{article}
{
	AUTHOR = {Beilinson, A.},
	Author = {Drinfeld, V.},
	TITLE  = {Quantization of Hitchin's integrable system and Hecke eigensheaves},
	Note = {\url{https://www.math.uchicago.edu/\textasciitilde mitya/langlands/hitchin/BD-hitchin.pdf}},
	Year={1997},
}

\bib{FB}{book}
{
	AUTHOR ={Frenkel, E.},
	Author ={Ben-Zvi, D.},
	TITLE = {Vertex algebras and algebraic curves},
	SERIES = {Mathematical Surveys and Monographs},
	VOLUME = {88},
	EDITION = {Second},
	PUBLISHER = {American Mathematical Society, Providence, RI},
	YEAR = {2004},
	PAGES = {xiv+400},
	ISBN = {0-8218-3674-9},
}

\bib{BE}{article}
{
	AUTHOR = {Bloch, S.},
	Author ={Esnault, H.},
	TITLE = {Local {F}ourier transforms and rigidity for {$\scr
			D$}-modules},
	JOURNAL = {Asian J. Math.},
	FJOURNAL = {Asian Journal of Mathematics},
	VOLUME = {8},
	YEAR = {2004},
	NUMBER = {4},
	PAGES = {587--605},
}

\bib{Chen}{article}
{
	AUTHOR = {Chen, T.-H.},
	TITLE = {Vinberg's {$\theta$}-groups and rigid connections},
	JOURNAL = {Int. Math. Res. Not. IMRN},
	FJOURNAL = {International Mathematics Research Notices. IMRN},
	YEAR = {2017},
	NUMBER = {23},
	PAGES = {7321--7343},
}

\bib{CK}{article}
{
	AUTHOR = {Chen, T.},
	Author = {Kamgarpour, M.},
	TITLE = {Preservation of depth in the local geometric {L}anglands
		correspondence},
	JOURNAL = {Trans. Amer. Math. Soc.},
	FJOURNAL = {Transactions of the American Mathematical Society},
	VOLUME = {369},
	YEAR = {2017},
	NUMBER = {2},
	PAGES = {1345--1364},
}

\bib{FrenkelBook}{article}
{
	AUTHOR = {Frenkel, E.},
	TITLE = {Langlands correspondence for loop groups},
	SERIES = {Cambridge Studies in Advanced Mathematics},
	VOLUME = {103},
	PUBLISHER = {Cambridge University Press, Cambridge},
	YEAR = {2007},
	PAGES = {xvi+379},
}

\bib{FrenkelLecture}{article}
{
	AUTHOR = {Frenkel, E.},
	TITLE = {Lectures on the {L}anglands program and conformal field
		theory},
	BOOKTITLE = {Frontiers in number theory, physics, and geometry. {II}},
	PAGES = {387--533},
	PUBLISHER = {Springer, Berlin},
	YEAR = {2007},
}

\bib{FrenkelRam}{article}
{
	AUTHOR = {Frenkel, E.},
	TITLE = {Ramifications of the geometric {L}anglands program},
	BOOKTITLE = {Representation theory and complex analysis},
	SERIES = {Lecture Notes in Math.},
	VOLUME = {1931},
	PAGES = {51--135},
	PUBLISHER = {Springer, Berlin},
	YEAR = {2008},
}

\bib{FrenkelBethe}{article}
{
	AUTHOR = {Frenkel, E.},
	TITLE = {Affine algebras, {L}anglands duality and {B}ethe ansatz},
	BOOKTITLE = {X{I}th {I}nternational {C}ongress of {M}athematical {P}hysics
		({P}aris, 1994)},
	PAGES = {606--642},
	PUBLISHER = {Int. Press, Cambridge, MA},
	YEAR = {1995},
}

\bib{FF}{article}
{
	AUTHOR = {Feigin, B.}
	Author = {Frenkel, E.},
	TITLE = {Affine Kac-Moody algebras at the critical level and Gelfand-Diki algebras.},
	SERIES = {Infniteanalysis,PartA,B(Kyoto,1991),Adv.Ser.Math.Phys.},
	VOLUME = {16},
	PUBLISHER = {WorldSci.Publ.,RiverEdge, NJ},
	YEAR = {1992},
	PAGES = { 197–215},
}

\bib{FFR}{article}
{
	AUTHOR = {Feigin, B.},
	Author ={Frenkel, E.},
	Author ={Rybnikov, L.},
	TITLE = {On the endomorphisms of {W}eyl modules over affine
		{K}ac-{M}oody algebras at the critical level},
	JOURNAL = {Lett. Math. Phys.},
	FJOURNAL = {Letters in Mathematical Physics},
	VOLUME = {88},
	YEAR = {2009},
	NUMBER = {1-3},
	PAGES = {163--173},
	ISSN = {0377-9017},
}

\bib{FFR10}{article}
{
	AUTHOR = {Feigin, B.},
	Author ={Frenkel, E.},
	Author ={Rybnikov, L.},
	TITLE = {Opers with irregular singularity and spectra of the shift of argument subalgebra},
	JOURNAL = {Duke Math. J.},
	FJOURNAL = {Duke Mathematical Journal},
	VOLUME = {155},
	YEAR = {2010},
	NUMBER = {2},
	PAGES = {337--363},
}

\bib{FFT}{article}
{
	AUTHOR = {Feigin, B.},
	Author ={Frenkel, E.},
	Author = {Toledano Laredo, V.},
	TITLE = {Gaudin models with irregular singularities},
	JOURNAL = {Adv. Math.},
	FJOURNAL = {Advances in Mathematics},
	VOLUME = {223},
	YEAR = {2010},
	NUMBER = {3},
	PAGES = {873--948},
}

\bib{FGLocal}{article}
{
	AUTHOR ={Frenkel, E.},
	Author ={Gaitsgory, D.},
	TITLE = {Local geometric {L}anglands correspondence and affine
		{K}ac-{M}oody algebras},
	BOOKTITLE = {Algebraic geometry and number theory},
	SERIES = {Progr. Math.},
	VOLUME = {253},
	PAGES = {69--260},
	PUBLISHER = {Birkh\"{a}user Boston, Boston, MA},
	YEAR = {2006},
}

\bib{FGWeyl}{article}
{
	AUTHOR ={Frenkel, E.},
	Author ={Gaitsgory, D.},
	TITLE = {Weyl modules and opers without monodromy},
	BOOKTITLE = {Arithmetic and geometry around quantization},
	SERIES = {Progr. Math.},
	VOLUME = {279},
	PAGES = {101--121},
	PUBLISHER = {Birkh\"{a}user Boston, Boston, MA},
	YEAR = {2010},
}

\bib{FGr}{article} 
{
    AUTHOR = {Frenkel, E.}
    Author= {Gross, B.},
    TITLE = {A rigid irregular connection on the projective line},
    JOURNAL = {Ann. of Math. (2)},
    VOLUME = {170},
    YEAR = {2009},
    NUMBER = {3},
    PAGES = {1469--1512},
}

\bib{HNY}{article} 
{
    Author={Heinloth, J.},
    Author={Ng\^{o}, B. C.},
    Author={Yun, Z.},
    Title={Kloosterman sheaves for reductive groups}, 
    Year={2013}, 
    Journal={Ann. of Math. (2)},
    Volume={177},
    Pages={241--310},
}

\bib{KXY}{article}
{
	title={Hypergeometric sheaves for classical groups via geometric Langlands}, 
	author={Kamgarpour, M.},
	Author={Xu, D.},
	Author={Yi, L.},
	year={2022},
	eprint={2201.08063},
	archivePrefix={arXiv},
	primaryClass={math.AG}
}

\bib{KSformal}{article}
{
	AUTHOR = {Kamgarpour, M.},
	Author ={Sage, Daniel S.},
	TITLE = {A geometric analogue of a conjecture of {G}ross and {R}eeder},
	JOURNAL = {Amer. J. Math.},
	FJOURNAL = {American Journal of Mathematics},
	VOLUME = {141},
	YEAR = {2019},
	NUMBER = {5},
	PAGES = {1457--1476},
}

\bib{KSCoxeter}{article}
{
	AUTHOR = {Kamgarpour, M.},
	Author ={Sage, Daniel S.},
	TITLE = {Rigid connections on {$\Bbb P^1$} via the {B}ruhat-{T}its
		building},
	JOURNAL = {Proc. Lond. Math. Soc. (3)},
	FJOURNAL = {Proceedings of the London Mathematical Society. Third Series},
	VOLUME = {122},
	YEAR = {2021},
	NUMBER = {3},
	PAGES = {359--376},
}

\bib{KatzRigid}{book}
{
	AUTHOR = {Katz, N. M.},
	TITLE = {Rigid local systems},
	SERIES = {Annals of Mathematics Studies},
	VOLUME = {139},
	PUBLISHER = {Princeton University Press, Princeton, NJ},
	YEAR = {1996},
	PAGES = {viii+223},
}

\bib{MolevBook}{book}
{
	AUTHOR = {Molev, A. I.},
	TITLE = {Sugawara operators for classical {L}ie algebras},
	SERIES = {Mathematical Surveys and Monographs},
	VOLUME = {229},
	PUBLISHER = {American Mathematical Society, Providence, RI},
	YEAR = {2018},
	PAGES = {xiv+304},
}

\bib{XuZhu}{article}
{
	Author = {Xu, D.},
	Author = {Zhu, X.},
	title  = {Bessel $F$-isocrystals for reductive groups},
	year={2022},
	Journal={Invent. math.}
}

\bib{YunCDM}{incollection}
{
	AUTHOR = {Yun, Z.},
	TITLE = {Rigidity in automorphic representations and local systems},
	BOOKTITLE = {Current developments in mathematics 2013},
	PAGES = {73--168},
	PUBLISHER = {Int. Press, Somerville, MA},
	YEAR = {2014},
}

\bib{Zhu}{article}
{
	AUTHOR = {Zhu, X.},
	TITLE = {Frenkel-{G}ross' irregular connection and
		{H}einloth-{N}g\^{o}-{Y}un's are the same},
	JOURNAL = {Selecta Math. (N.S.)},
	FJOURNAL = {Selecta Mathematica. New Series},
	VOLUME = {23},
	YEAR = {2017},
	NUMBER = {1},
	PAGES = {245--274},
}

\end{biblist}
\end{bibdiv} 

\end{document}